\newtheorem{definition}{Definition}[section]
\newtheorem{remark}{Remark}[section]
\newtheorem{lemma}{Lemma}[section]
\newtheorem{theorem}{Theorem}[section]
\newtheorem{fact}{Fact}[section]
\newtheorem{corollary}{Corollary}[section]
\newtheorem{alg}{Algorithm}[section]
\title{An Optimal Block Diagonal Preconditioner for Heterogeneous
  Saddle Point Problems in Phase Separation}
\author[1]{Pawan Kumar\thanks{kumar@zedat.fu-berlin.de}}
\affil[1]{Arnimallee 6}
\affil[1]{Department of Mathematics and Computer Science}
\affil[1]{Freie Universität Berlin}
\affil[1]{Berlin 14195, Germany}
\date{\today}
\begin{document}

\maketitle
 
\begin{abstract}
  The phase separation processes are typically modeled by Cahn-Hilliard
  equations. This equation was originally introduced to model phase separation
  in binary alloys; where phase stands for concentration of different components
  in alloy. When the binary alloy under preparation is subjected to a rapid
  reduction in temperature below a critical temperature, it has been
  experimentally observed that the concentration changes from a mixed state to a
  visibly distinct spatially separated two phase for binary alloy. This rapid
  reduction in the temperature, the so-called ``deep quench limit'', is modeled
  effectively by obstacle potential.

  The discretization of Cahn-Hilliard equation with obstacle potential leads to
  a block $2 \times 2$ {\em non-linear} system, where the $(1,1)$ block has a
  non-linear and non-smooth term. Recently a globally convergent Newton Schur
  method was proposed for the non-linear Schur complement corresponding to this
  non-linear system. The proposed method is similar to an inexact active set
  method in the sense that the active sets are first approximately identified by
  solving a quadratic obstacle problem corresponding to the $(1,1)$ block of the
  block $2 \times 2$ system, and later solving a reduced linear system by
  annihilating the rows and columns corresponding to identified active sets. For
  solving the quadratic obstacle problem, various optimal multigrid like methods
  have been proposed. In this paper, we study a non-standard norm that is
  equivalent to applying a block diagonal preconditioner to the reduced linear
  systems. Numerical experiments confirm the optimality of the solver and
  convergence independent of problem parameters on sufficiently fine mesh.
\end{abstract}

\section{Introduction} 
The Cahn-Hilliard equation was first proposed in 1958 by
Cahn and Hilliard \cite{Cahn1958} to study the phase separation
process in a binary alloy. Here the term phase stands for the
concentration of different components in the alloy. It has been
empirically observed that the concentration changes from the mixed
state to a visibly distinct spatially separated two phase state when
the alloy under preparation is subjected to a rapid cooling below a
critical temperature. This rapid reduction in the temperature the
so-called deep quench limit has been found to be modeled efficiently
by obstacle potential proposed by Oono and Puri \cite{Oono1987} in
1987 and by Blowey and Elliot \cite[p. 237, (1.14)]{Blowey1991}. The
phase separation has been noted to be highly non-linear (point
nonlinearity to be precise), and the obstacle potential emulates the
nonlinearity and non-smoothness that is empirically observed. However,
handling the non-smoothness as well as designing robust iterative
procedure has been the subject of much active research in last
decades. Assuming semi-implicit time discretizations \cite{Blowey1992}
to alleviate the time step restrictions, most of the proposed methods
essentially differ in the way the nonlinearity and non-smoothness are
handled. There are two main approaches to handle the non-smoothness:
regularization around the non-smooth region \cite{Bosch2014} or an
active set approach \cite{Graeser2009} i.e., identify the active sets
and solve a reduced problem which is linear, in addition to ensuring
the global convergence of the Newton method by proper damping
parameter. The non-linear problem corresponding to Cahn-Hilliard with
obstacle potential could be written as a non-linear system in block $2
\times 2$ matrix form as follows:
\begin{align}
  \begin{pmatrix}
    F & B^T \\
    B & -C
  \end{pmatrix}
  \begin{pmatrix}
    u^* \\ w^*
  \end{pmatrix}
  \ni
  \begin{pmatrix}
    f \\ g
  \end{pmatrix}, \quad u^*, w^* \in \mathbb{R}^n \label{eqn:saddle}
\end{align}
where $u^*, w^*$ are unknowns, $F = A + \partial I_K,$ where $I_K$ denotes the
indicator functional of the admissible set $K.$ The matrices $A,C$ are
essentially Laplacian with $A$ augmented by a non-local term reflecting mass
conservation, a necessary condition in Cahn-Hilliard model. Both nonlinearity
and non-smoothness are due to the presence of term $\partial I_K$ in $F.$ By
nonlinear Gaussian elimination of the $u$ variables, the system above could be
reduced to a nonlinear Schur complement system in $w$ variables
\cite{Graeser2009}, where the nonlinear Schur complement is given by $C -
BF^{-1}B^T.$ In \cite{Graeser2009}, a globally convergent Newton method is
proposed for this nonlinear Schur complement system which is interpreted as a
preconditioned Uzawa iteration. Note that $F(x)$ is a set valued mapping due to
the presence of set-valued operator $\partial I_K;$ to solve the inclusion $F(x)
\ni y$ corresponding to the quadratic obstacle problem, many methods have been
proposed such as projected block Gauss-Seidel \cite{Barrett2004}, monotone
multigrid method \cite{Kornhuber1994, Kornhuber1996, Mandel1984}, truncated
monotone multigrid \cite{Graeser2009b}, and truncated Newton multigrid
\cite{Graeser2009b}. See the excellent review article \cite{Graeser2009b} that
compares these methods. By annihilating the corresponding rows and columns that
belongs to the active sets identified by solving the obstacle problem, we obtain
a reduced linear system  as follows
\begin{align}
  \begin{pmatrix}
    \hat{A} & \hat{B}^T \\
    \hat{B} & -C
  \end{pmatrix}
  \begin{pmatrix}
    \hat{u} \\ \hat{w}
  \end{pmatrix}
  =
  \begin{pmatrix}
    \hat{f} \\ \hat{g}
  \end{pmatrix}, \quad \hat{u}, \hat{w} \in \mathbb{R}^n \label{eqn:red}
\end{align}
that correspond to new descent direction in the Uzawa iteration. 
The overall nonlinear iteration is performed in the sense of inexact
Uzawa, and the preconditioners are updated with next available active
sets.

In this paper our goal is to design effective preconditioner and hence an
iterative solver for \eqref{eqn:red} such that the convergence rate is
independent of problem parameters. In particular, we consider a block diagonal
preconditioner proposed in \cite{Bosch2014}; we adapt it to our linear
system, we prove properties relevant for iterative solvers, derive spectral
radius of the preconditioned operator, and show the effectiveness of the
preconditioner numerically compared to a Schur complement preconditioner
proposed recently for same model.

The rest of this paper is organized as follows. In Section \ref{sec:ch}, we
describe the Cahn-Hilliard model with obstacle potential, we discuss the time
and space discretizations and variational formulations. In Section
\ref{sec:solver}, we discuss briefly the solver for Cahn-Hilliard with obstacle
problem. In particular, we briefly discuss Nonsmooth Newton Schur method seen as
an Uzawa iteration, and the truncated Newton multigrid for the obstacle
problem. The preconditioners for the reduced linear systems are discussed in
Section \ref{sec:precon}. Finally in Section \ref{sec:numexp}, we shown
numerical experiments with the proposed preconditioner.

\section{\label{sec:not}Notations}
Let SPD and SPSD denote symmetric positive definite and symmetric positive semi
definite. Let $\kappa(M)$ denote the condition number of SPD matrix $M.$ For $x
\in \mathbb{R}, |x|$ denotes the absolute value of $x,$ whereas for any set
$\mathcal{K},$ $|\mathcal{K}|$ denotes the number of elements in $\mathcal{K}.$
Let $Id \in \mathbb{R}^{n \times n}$ denote the identity matrix. Let
$\mathbf{1}$ denote $[1,1,1,\dots,1].$ For a symmetric matrix $Z \in
\mathbb{R}^{n \times n},$ the eigenvalues are denoted and ordered as follows
\begin{align}
  \lambda_{1}(Z) \leq \lambda_{2}(Z) \leq \dots \leq \lambda_{n}(Z).
\end{align}

\section{\label{sec:ch} Cahn-Hilliard Problem with Obstacle Potential}
\subsection{The Model}
The Ginzburg-Landau (GL) energy functional which is given as follows
\begin{align}
  E(u) = \int_{\Omega} \frac{\epsilon}{2}|\nabla u|^2 +
  \psi(u) \, dx, \quad \Omega=(0,1) \times (0,1) \label{eqn:gle}
\end{align}
leads to Cahn-Hilliard equation under $H^{-1}$ gradient flow. Here the
constant $\epsilon$ relates to interfacial thickness and the obstacle
potential $\psi$ is given as follows:
\begin{align}
  \psi(u) = \psi_0(u) + I_{[-1, 1]}(u), \quad \mbox{where} \: \psi_0(u) =
  \frac{1}{2}(1-u^2).
\end{align}
Here the subscript $[-1,1]$ of indicator function $I$ above denotes the range of
values of $u.$ Let $u_1$ and $u_2$ be the concentration of the two components in
the binary alloy, then $u=u_1 - u_2,$ where $u_1,u_2 \in [0,1].$ Here
$I_{[-1,1]}(u)$ is defined as follows:
\begin{align}
  I_{[-1,1]} = \begin{cases} 0, \: \mbox{if} \: u(i) \in [-1,1], \\
    \infty, \: \mbox{otherwise}.
    \end{cases}
\end{align}
Moreover, $u_1+u_2$ is assumed to be conserved. 
We consider weak form of $H^{-1}$ gradient flow of $\epsilon E$ as follows 
\begin{align*}
  (\partial_t u, v)_{H^{-1}} = \epsilon (-\nabla E(u), v) 
  \iff ((-\Delta)^{-1} \partial u, v)_{L^2} = \epsilon \langle -\nabla E(u), v
  \rangle. 
\end{align*} 
And strong form reads 
\begin{align*}
  (-\nabla)^{-1}\partial_t u = -\epsilon \nabla E(u) 
  \iff \partial_t u = \epsilon (-\nabla)(-(\nabla E)(u)) = \Delta
   \epsilon \partial E(u).
\end{align*}
Now setting $w = \epsilon \partial E(u)$ above. From \eqref{eqn:gle} with
$\gamma=1$, we have
\begin{align*}
  \partial E(u) = -\epsilon \Delta u + \frac{1}{\epsilon} (\psi_0'(u) + \mu) 
  \implies w = \epsilon \partial E(u) = - \epsilon^2 \Delta u + \psi_0'(u) + \mu.
\end{align*} 
Putting everything together, the Cahn-Hilliard equation in PDE form with
inequality constraints obtained from GL energy \eqref{eqn:gle} reads:
\begin{align}
  \partial_t u &= \Delta w, \\
  w &= - \epsilon \Delta u + \psi^{'}_{0}(u) + \mu, \\
  \mu &\in \partial I_{[-1,1]}(u), \\
%  |u| &\leq 1, \\
  \frac{\partial u}{\partial n} &= \frac{\partial w}{\partial n} = 0
  \: \mbox{on} \, \partial \Omega.
\end{align}
The unknowns $u$ and $w$ are called order parameter and chemical potential
respectively.  For a given $\epsilon > 0,$ final time $T>0,$ and initial
condition $u_0 \in \mathcal{K}$ where
\begin{align} \mathcal{K}=\{ v \in H^1(\Omega) \, : \, |v| \leq
  1\}, \label{eqn:setK} \end{align} the equivalent initial value problem for
Cahn-Hilliard equation with obstacle potential interpreted as variational
inequality reads
\begin{align}
  \left \langle \frac{du}{dt}, v \right \rangle + (\nabla w, \nabla v)
  &= 0, \: \forall v \in H^1(\Omega), \label{eqn:ch31} \\
  \epsilon (\nabla u, \nabla(v-u)) - (u, v-u) &\geq (w,v-u), \: \forall v \in
  \mathcal{K}, \label{eqn:ch32}
\end{align}
where we use the notation $\langle \cdot, \cdot \rangle$ to denote the duality
pairing of $H^1(\Omega)$ and $H^1(\Omega)^{'}.$ Note that we used the fact that
$\psi^{'}_{0}(u) = -u$ in the second term on the left of inequality
\eqref{eqn:ch32} above. The inequalities \eqref{eqn:ch31} and \eqref{eqn:ch32}
are defined on constrained set $\mathcal{K},$ the variational inequality of
first kind is also equivalently represented on unconstrained set using indicator
functional \cite[p. 2]{Glowinski2008}. The existence and uniqueness of the
solution of \eqref{eqn:ch31}, \eqref{eqn:ch32} above has been established in
Blowey and Elliot \cite{Blowey1991}. We next consider an appropriate
discretization in time and space for the model.
\subsection{Time and space discretizations}\label{timeAndSpace}
We consider a fixed non-adaptive grid in time interval (0,T) and in space $\Omega$
defined in \eqref{eqn:gle}. The time step $\tau = T/N$ is kept uniform. We
consider the semi-implicit Euler discretization in time and finite element
discretization in space as in Barrett et. al. \cite{Barrett2004} with triangulation
$\mathcal{T}_h$ with the following spaces (as in \cite{Graeser2009}):
\begin{align}
  \mathcal{S}_h &= \left\{ v \in C(\overline{\Omega}) \, : v|_T ~\mbox{is
    linear} \quad \forall T \in T_h \right\}, \\
  \mathcal{P}_h &= \left\{ v \in L^2(\Omega) \, : \, v_T \,
  \mbox{is constant} \quad \forall T \in \mathcal{T} \in \mathcal{T}_h \right\}, \\
  \mathcal{K}_h &= \left\{ v \in \mathcal{P}_h \, : \, \left|\, v|_T \, \right| \leq 1 \quad \forall T
  \in \mathcal{T}_h \right\} = \mathcal{K} \cap \mathcal{S}_h \subset \mathcal{K},
\end{align}
which leads to the following discrete Cahn-Hilliard problem with obstacle
potential: \\\\
Find $u^k_h \in \mathcal{K}_h, w^k_h \in \mathcal{S}_h$ s.t. 
\begin{align}
  \langle u^k_h, v_h \rangle + \tau (\nabla w^k_h, \nabla v_h)
  &= \langle u^{k-1}_h, v_h \rangle, \: \forall v_h \in \mathcal{S}_h, \label{eqn:dche1} \\
  \epsilon (\nabla u^k_h, \nabla (v_h - u^k_h)) - \langle w^k_h, v_h - u^k_h
  \rangle &\geq \langle u^{k-1}_h, v_h - u^k_h \rangle, \: \forall v_h \in
  \mathcal{K}_h \label{eqn:dche2}
\end{align}
holds for each $k=1,\dots,N.$ The initial solution $u^0_h \in \mathcal{K}_h$ is
taken to be the discrete $L^2$ projection $\langle u^0_h, v_h \rangle = (u_0,
v_h), \forall v_h \in \mathcal{S}_h.$ 

Existence and uniqueness of the discrete Cahn-Hilliard equations has been
established in \cite{Blowey1992}. The discrete Cahn-Hilliard equation is
equivalent to the set valued saddle point block $2 \times 2$ nonlinear system
\eqref{eqn:saddle} with $F = A + \partial I_{\mathcal{K}_h}$ and
\begin{align}
  A = \epsilon (\langle \lambda_p, 1 \rangle \langle \lambda_p, 1 \rangle
  + (\nabla \lambda_p, \nabla \lambda_q))_{p,q \in \mathcal{N}_h}, \\
  B = (\langle \lambda_p, \lambda_q \rangle)_{p,q \in \mathcal{N}_h}, \: C =
  \tau ((\nabla \lambda_p, \nabla \lambda_q))_{p,q \in \mathcal{N}_h}. \label{eqn:stiff}
\end{align}
We write the above in more compact notations as follows
\begin{align}
  A = \epsilon(K+mm^T), \quad B = M, \quad C = \tau K, \label{eqn:short_not}
\end{align}
where $m = \langle \lambda_p, 1 \rangle,$ $M$ and $K$ are mass and stiffness
matrices respectively.
\section{\label{sec:solver}Iterative solver for Cahn-Hilliard with
  obstacle potential}
In \cite{Graeser2009}, a nonsmooth Newton Schur method is proposed which is also
interpreted as a preconditioned Uzawa iteration. For a given time step $k,$ the
Uzawa iteration reads:
\begin{align}
  u^{i,k} &= F^{-1} (f^k - B^T w^{i,k}), \label{eqn:uzawa1} \\
  w^{i+1,k} &= w^{i,k} + \rho^{i,k} \hat{S}^{-1}_{i,k}(Bu^{i,k} - Cw^{i,k} -
  g^k) \label{eqn:uzawa2}
\end{align}
for the saddle point problem \eqref{eqn:saddle}. Here $i$ denotes the $i^{th}$
Uzawa step, and $k$ denotes the $k^{th}$ time step. Here $f^k$ and $g^k$ are
defined as follows
\begin{align}
  \langle f, v_h\rangle = \langle u_h^{k-1}, v_h \rangle, \quad \langle g, v_h
  \rangle = - \langle u_h^{k-1}, v_h \rangle.
\end{align}
The time loop starts with an initial value for $w^{0,0}$ which can be taken
arbitrary as the method is globally convergent, and with the initial value
$u^{0,0}.$ The Uzawa iteration requires three main computations that we describe
below.
\subsection{Computing $u^{i,k}$}
The first step \eqref{eqn:uzawa1} corresponds to solving a quadratic obstacle
problem interpreted as a minimization problem as follows
\begin{align}
  u^{i,k} = \mbox{arg}~\underset{v \in K}{\mbox{min}} \left(\frac{1}{2}\langle
    Av, v \rangle - \langle f^k - B^T w^{i,k}, v \rangle \right).
\end{align}
As mentioned in the introduction, this problem has been extensively studied
during last decades \cite{Barrett2004, Graeser2009b, Kornhuber1994,
  Kornhuber1996}.
\subsubsection{Algebraic Monotone Multigrid for Obstacle Problem}
To solve the quadratic obstacle problem \eqref{eqn:uzawa1}, we use the monotone
multigrid method proposed in \cite{Kornhuber1994}. In Algorithm \ref{alg:mmg} we
describe an algebraic variant of the method. The algorithm performs one V-cycle
of multigrid; it takes $u^i$ from the previous iteration, and outputs the
improved solution $u^{i+1}.$ The initial set of interpolation operators are
constructed using aggregation based coarsening \cite{Kumar2014}.
\subsection{Computing $\hat{S}^{-1}_{i,k}(Bu^{i,k} - Cw^{i,k} - g^k)$}
The quantity $d^{i,k} = \hat{S}^{-1}_{i,k} (Bu^{i,k} - Cw^{i,k} - g^k)$ in
\eqref{eqn:uzawa2} is obtained as a solution of the following reduced linear
block $2 \times 2$ system:
\begin{align}
  \begin{pmatrix}
    \hat{A} & \hat{B}^T \\
    \hat{B} & -C
  \end{pmatrix}
  \begin{pmatrix}
    \tilde{u}^{i,k} \\ d^{i,k} 
  \end{pmatrix} =
  \begin{pmatrix}
    0 \\ g + Cw^{i,k} - Bu^{i,k} \label{eqn:rls}
  \end{pmatrix},
\end{align}
where 
\begin{align}
\hat{A} = TAT + \hat{T}, \quad \hat{B} = BT. \label{eqn:trunc_mass_stiff}
\end{align}
Here truncation matrices $T$ and $\hat{T}$ are defined as follows:
\begin{align} \label{eqn:TandThat}
  T = \mbox{diag}
  \begin{pmatrix*}[l]
    0, \quad \mbox{if}~u^{i,k}(j) \in \{-1, 1\} \\
    1, \quad \mbox{otherwise}
  \end{pmatrix*}, \quad
  \hat{T} = \mbox{diag}
  \begin{pmatrix*}[l]
    1, \quad \mbox{if}~T_{jj}=0 \\
    0, \quad \mbox{otherwise}
  \end{pmatrix*}, \quad j=1,\dots,|N_h|,
\end{align}
where $u^{i,k}(j)$ is the $j$th component of $u^{i,k},$ and $T_{jj}$ is the
$j$th diagonal entry of $T.$ In words, $\hat{A}$ is the matrix obtained from $A$
by replacing the $i$th row and $i$th column by the unit vector $e_i$
corresponding to the active sets identified by diagonal entries of $T.$
Similarly, $\hat{B}$ is the matrix obtained from $B$ by annihilating rows, and
$\hat{B}^T$ is the matrix obtained from $B$ by annihilating columns.
Rewriting untruncated version of \eqref{eqn:rls} in simpler notation as follows
\begin{align}
 \left( \begin{array}{cc}
         \epsilon \bar{K} & M \\
         M & -\tau K
        \end{array} \right) \left( \begin{array}{c}
                            x \\ y
                           \end{array} \right) = \left( \begin{array}{c}
                                                         0 \\ b
                                                        \end{array} 
                                                        \right),
\end{align}
where $\bar{K} = K + mm^T.$ By a change of variable $y' = y/\epsilon,$ we obtain
\begin{align}
 \left( \begin{array}{cc}
         \bar{K} & M \\
         M  & -\eta K
        \end{array}\right) \left( \begin{array}{c}
                            x \\ y'
                           \end{array} \right) = \left( \begin{array}{c}
                                                         0 \\ b
                                                        \end{array} 
                                                        \right).
\end{align}
Furthermore, we modify the $(2,2)$ term of the system matrix above as follows:
\begin{align}
 - \eta K = - \eta
K - \eta mm^T + \eta mm^T = - \eta \bar{K} + (\eta^{1/2}m) (\eta^{1/2}m^T) = - \eta \bar{K} +
\tilde{m} \tilde{m}^T. 
\end{align}
Now the untruncated system may be rewritten as
\begin{align}
 \widetilde{\mathcal{A}} = \begin{pmatrix}
                \bar{K} & M \\
		M & -\eta \bar{K}  
               \end{pmatrix} + \hat{m} \hat{m}^T = \mathcal{A} + \hat{m}\hat{m}^T               \label{eqn:simplified}
\end{align}
where $\hat{m} = [0, \tilde{m}^T]$ is a rank one term with proper extension by
zero. Now we are in a position to use Sherman-Woodbury inversion for matrix plus
rank-one term. In the following, we shall concern ourselves in developing 
efficient preconditioners to solve with $\mathcal{A}$ in \eqref{eqn:simplified}.

\subsection{Computing step length $\rho^{i,k}$}
The step length $\rho^{i,k}$ is computed using a bisection method. We refer 
the interested reader to \cite{Graeser2011}[p. 88].
 
\begin{algorithm}
  \begin{algorithmic}[1]
    \REQUIRE Let $V_1 \subset V_1 \subset V_2 \cdots V_m$ and let $r_m, b_m \in
    V_m,$ \REQUIRE $u^i, i>0$ solution from previous cycle or $u^0$ the initial
    solution \STATE Compute residual: \: $r_m = b_m - A_m u^i$ \STATE Compute
    defect obstacles: \begin{align} \begin{cases}
        \underline{\delta}_m = \underline{\psi} - u^i \\
        \overline{\delta}_m = \overline{\psi} - u^i
      \end{cases} \end{align} \FOR{$\ell=m,\cdots,2$} \STATE Projected
    Gauss-Seidel Solve
    \begin{align}
      (D_\ell + L_\ell + \partial I_{\mathcal{K}^\ell})v_\ell = r_\ell
    \end{align}
    where,
    \begin{align}
      \mathcal{K}^\ell = \left\{\, v \in \mathbb{R}^{n_\ell} \,
        \middle| \: \underline{\delta}_{\ell} \leq v \leq
        \overline{\delta}_{\ell} \, \right\}
    \end{align}
    using Algorithm \ref{alg:pgs}.
    % as follows:
    % \begin{align}
    %   v_{\ell} \leftarrow PGS()
    % \end{align}
    \STATE Update
    \begin{align}
      \begin{cases} r_{\ell} := r_{\ell} - A_{\ell} v_{\ell} \\
        \underline{\delta}_{\ell - 1} := \underline{\delta}_{\ell} - v_{\ell} \\
        \overline{\delta}_{\ell - 1} := \overline{\delta}_{\ell} -
        v_{\ell}
      \end{cases}
    \end{align}
    \STATE Restrict and compute new obstacle
    \begin{align}
      \begin{cases}
        r_{\ell-1} = P^T_{\ell-1}r_{\ell}\\
        (\underline{\delta}_{\ell-1})_i := \max \left\{
          (\underline{\delta}_{\ell-1})_j \: \middle| \:
          (P_{\ell-1})_{ji} \neq 0 \right\}, \: i=1,\dots,n_{\ell} -1 \\
        (\overline{\delta}_{\ell-1})_i := \min \left\{
          (\overline{\delta}_{\ell-1})_j \: \middle| \:
          (P_{\ell-1})_{ji} \neq 0 \right\}, \: i=1,\dots,n_{\ell} -1
      \end{cases}
    \end{align}
    \ENDFOR
    \STATE Solve
    \begin{align}
      (D_1 + L_1 + \partial I_{\mathcal{D}^1})v_1 = r_1
    \end{align}
    \FOR{$\ell = 2, \cdots, m$} \STATE Add corrections
    \begin{align}
      v_{\ell} := v_{\ell} + P_{\ell-1}v_{\ell-1}
    \end{align}
    \ENDFOR
    \STATE Compute
    \begin{align}
      u^{i+1} = u^i + v_m
    \end{align}
    \ENSURE improved solution $u^{i+1}$
  \end{algorithmic}
  \caption{\label{alg:mmg}Monotone Multigrid (MMG) V cycle}
\end{algorithm}

\begin{algorithm}
  \begin{algorithmic}[1]
    \REQUIRE $A \in \mathbb{R}^{n_{\ell} \times n_{\ell}}, \, b,
    \underline{\psi}, \overline{\psi} \in \mathbb{R}^{n_{\ell}},$
    current iterate $x^i \in \mathbb{R}^{n_{\ell}}$ \ENSURE new
    iterate $x^{i+1} \in \mathbb{R}^{n_{\ell}}$ \STATE Compute
    residual: \[r := b - Ax^i \] \STATE Compute defect
    obstacles: \[\underline{\psi} := \underline{\psi} - x^i \] \[
    \overline{\psi} := \overline{\psi} - x^i \] \FOR{$i=1:n_{\ell}$}
    \FOR{$j=1:i$} \STATE Compute $y_i$
    \begin{align}
      y_i = \begin{cases}
        \max \left(\min \left((r_i - A_{ij}y_j)/A_{ii}, \: \overline{\psi}_i \right), \: \underline{\psi}_i \right), \quad \mbox{if} \: A_{ii} \neq 0, \\
        0, \quad \mbox{otherwise}
      \end{cases}
    \end{align}
    \ENDFOR
    \ENDFOR
    \STATE $x^{i+1} = x^i+y$
  \end{algorithmic}
  \caption{\label{alg:pgs} $x^{i+1}$ $\leftarrow$ PGS($x^i, A,
    \underline{\psi}, \overline{\psi}, b$)}
\end{algorithm}

\subsection{\label{sec:mixedfem} Mixed Finite Element Formulation of Reduced
  Linear System}
In section \ref{timeAndSpace}, we already discussed the finite element
discretization of the reduced linear system. To fit our problem into the
Zulehner's approach, we rewrite the PDE corresponding to the reduced linear
system, the corresponding continuous weak form in product space using mixed
bilinear forms.  For corresponding weak formulations of \eqref{eqn:rls}. We
first write the corresponding partial differential equations as follows:
\begin{equation}
\begin{aligned}
  \epsilon \Delta u + \lambda &= f \quad \mbox{in} \, \Omega_I \subset \Omega,  \\
  u - \tau \Delta \lambda &= g \quad \mbox{in} \, \Omega,  \\
  \frac{\partial u}{\partial n} = \frac{\partial \lambda}{\partial n} &= 0 \quad \mbox{on} \, \partial \Omega,  \\
  u &= 0 \quad \mbox{on} \ \partial \Omega_I \setminus \partial
  \Omega. \label{eqn:original}
\end{aligned}
\end{equation}
To this end, we choose suitable Hilbert spaces for trial (i.e. weak solution)
and test spaces as follows
\begin{align}
 \hat{V} &= \{v \in H^1(\Omega) : v|_{\Omega_A}=0 \}, \quad Q = H^1_0(\Omega), \label{eqn:V} 
 %V^0 &= V \cap H^1_0(\Omega). \label{eqn:V2} 
\end{align} 
where $\Omega_A = \Omega \setminus \Omega_I.$
The weak form of the partial differential equations \eqref{eqn:original} corresponding to \eqref{eqn:rls} is written as follows:\\\\
% The continuous weak variational form for problem \eqref{eqn:original} now reads:\\\\
Find $(u, \lambda) \in \hat{V} \times H^1(\Omega):$
\begin{align}
  \hat{a}(u, v) + \hat{b}(v, \lambda) &= f(v) \quad \mbox{for all} \: v \in \hat{V}, \label{eqn:weak1}\\
  \hat{b}(u, q) - c(\lambda, q) &= g(q) \quad \mbox{for all} \: q \in
  Q, \label{eqn:weak2}
\end{align}
where
\begin{equation}
\begin{aligned} \label{eqn:abc}
 \hat{a}(u,v) &= \epsilon \left( (\nabla u, \nabla v) + \int_{\Omega} u \int_{\Omega} v \right)
        = \epsilon \left((\nabla u, \nabla v) + \langle u, \mathbf{1} \rangle \langle v, \mathbf{1}\rangle \right) , \\ 
 \hat{b}(v,\lambda) &= (v, \lambda), \quad
 c(\lambda,q) = \tau(\nabla \lambda, \nabla q).
\end{aligned}
\end{equation}
We immediately observe the following trivial properties for the system
\eqref{eqn:weak1}-\eqref{eqn:weak2}.
\begin{theorem}[Properties of bilinear forms]\label{thm:properties}
 There holds
 \begin{enumerate}
  \item \label{item:1} $\hat{a}(\cdot, \cdot)$ is symmetric and coercive
  \item \label{item:2} $c(\cdot, \cdot)$ is symmetric and semi-coercive
  \item \label{item:3} $\hat{a}(\cdot, \cdot),$ $\hat{b}(\cdot, \cdot),$ and $c(\cdot,
    \cdot)$ are bounded
 \end{enumerate}
\end{theorem}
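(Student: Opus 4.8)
The plan is to treat symmetry and boundedness by direct inspection, and to concentrate the actual work on the coercivity of $\hat{a}$, which is the only assertion that is not a one‑line calculation.

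First, for the symmetry parts of items 1 and 2: the forms $\hat{a}$ and $c$ are assembled only from the $L^2$ inner product $(\cdot,\cdot)$, the Dirichlet form $(\nabla\cdot,\nabla\cdot)$, and the rank‑one term $\langle\cdot,\mathbf 1\rangle\langle\cdot,\mathbf 1\rangle$ occurring in $\hat{a}$, each of which is manifestly symmetric; so there is nothing to prove beyond recording this. For boundedness (item 3) I would apply Cauchy--Schwarz term by term, using that the $H^1(\Omega)$‑norm dominates both the $L^2$‑norm and the gradient $L^2$‑norm, together with $|\langle v,\mathbf 1\rangle| = |\int_\Omega v|\le|\Omega|^{1/2}\|v\|_{L^2}=\|v\|_{L^2}$ since $|\Omega|=1$. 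This yields $|\hat{a}(u,v)|\le 2\epsilon\|u\|_{H^1}\|v\|_{H^1}$, $|\hat{b}(v,\lambda)|\le\|v\|_{H^1}\|\lambda\|_{H^1}$, and $|c(\lambda,q)|\le\tau\|\lambda\|_{H^1}\|q\|_{H^1}$. For the semi‑coercivity of $c$ I would note that $c(q,q)=\tau\|\nabla q\|_{L^2}^2\ge 0$, with vanishing precisely on the constants, so $c$ is symmetric positive semidefinite on $H^1(\Omega)$ — i.e. semi‑coercive — while as an aside, on $Q=H^1_0(\Omega)$ the Poincar\'e--Friedrichs inequality promotes this to genuine coercivity with constant $\tau C_P$.

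The one genuinely non‑trivial point is the coercivity of $\hat{a}$ on $\hat{V}$: I must produce $\alpha>0$ with $\hat{a}(v,v)=\epsilon\bigl(\|\nabla v\|_{L^2}^2+(\int_\Omega v)^2\bigr)\ge\alpha\|v\|_{H^1(\Omega)}^2$ for all $v\in\hat{V}$. The Dirichlet term alone degenerates on the constants, so the nonlocal term $(\int_\Omega v)^2$ has to supply the missing $L^2$ control. The key ingredient is the generalized Poincar\'e (Poincar\'e--Wirtinger, or Ne\v{c}as) inequality: there is $C_\Omega>0$ such that $\|v\|_{H^1(\Omega)}^2\le C_\Omega\bigl(\|\nabla v\|_{L^2}^2+(\int_\Omega v)^2\bigr)$ for all $v\in H^1(\Omega)$; equivalently, $v\mapsto\bigl(\|\nabla v\|_{L^2}^2+(\int_\Omega v)^2\bigr)^{1/2}$ is a norm on $H^1(\Omega)$ equivalent to $\|\cdot\|_{H^1(\Omega)}$. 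Granting this, $\hat{a}(v,v)\ge(\epsilon/C_\Omega)\|v\|_{H^1}^2$ holds on all of $H^1(\Omega)$, a fortiori on the closed subspace $\hat{V}$.

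The main obstacle is thus the generalized Poincar\'e inequality itself, which I would prove by the standard compactness/contradiction argument: were it false there would exist $v_n$ with $\|v_n\|_{H^1}=1$ and $\|\nabla v_n\|_{L^2}^2+(\int_\Omega v_n)^2\to 0$; the sequence is bounded in $H^1(\Omega)$, so along a subsequence $v_n\rightharpoonup v$ weakly in $H^1(\Omega)$ and, by the compact embedding $H^1(\Omega)\hookrightarrow L^2(\Omega)$, $v_n\to v$ strongly in $L^2$; since $\nabla v_n\to 0$ in $L^2$ we get $\nabla v=0$, so $v$ is constant, and since $\int_\Omega v_n\to 0$ that constant must be $0$; but then $\|v_n\|_{L^2}^2=1-\|\nabla v_n\|_{L^2}^2\to 1$, contradicting $v_n\to 0$ in $L^2$. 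As a remark, when one knows a priori that $|\Omega_A|>0$ an alternative is a Poincar\'e--Friedrichs inequality on $\hat{V}$ using that its members vanish on $\Omega_A$; but $\Omega_A$ may be empty, so the generalized Poincar\'e route is the robust choice, at the cost of a coercivity constant carrying a factor of $\epsilon$.
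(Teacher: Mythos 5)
Your proposal is correct and follows essentially the same route as the paper's (very terse) proof: coercivity of $\hat{a}$ and the statement about $c$ rest on a Poincar\'e-type inequality --- in your case the generalized Poincar\'e/Ne\v{c}as inequality for $\|\nabla v\|_{L^2}^2+(\int_\Omega v)^2$, which you prove by the standard compactness argument --- and boundedness follows from Cauchy--Schwarz. The only divergence is that you bound $\hat{a}$ and $c$ directly at the continuous level without the inverse inequality the paper invokes (which is only relevant for mesh-dependent discrete estimates), and your version is the cleaner one for the statement as given.
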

\begin{proof}
  \eqref{item:1} and \eqref{item:2} follows from Poincar\`e
  inequality. Boundedness of $\hat{b}(\cdot, \cdot)$ follows from Cauchy-Schwarz
  inequality, and boundedness of $\hat{a}(\cdot, \cdot)$ and $c(\cdot, \cdot)$ follows
  from Cauchy-Schwarz inequality followed by inverse inequality.
\end{proof}

The mixed variational problem above can also be written as a variational form on product spaces: \\\\
Find $x \in \hat{V} \times Q:$
\begin{align}
 \hat{\mathcal{B}} (x,y) = \mathcal{F}(y) \quad \mbox{for all}~ y \in V \times H^1(\Omega) \label{eqn:mixed_bilin}
\end{align}
where $\mathcal{B}$ and $\mathcal{F}$ are defined as follows
\begin{align}
\hat{\mathcal{B}} (z,y) = \hat{a}(w,v) + \hat{b}(v,r) + \hat{b}(w,q) - c(r,q), \quad
\mathcal{F}(y) = f(v) + g(q)        
\end{align}
for $y=(v,q) \in \hat{V} \times Q$ and $z = (w,r) \in \hat{V} \times Q.$ 
The corresponding bilinear form for the untruncated system is given as follows
\begin{align}
\mathcal{B} (z,y) = a(w,v) + b(v,r) + b(w,q) - c(r,q), \quad
\mathcal{F}(y) = f(v) + g(q)        
\end{align}
for $y=(v,q) \in V \times Q$ and $z = (w,r) \in V \times Q,$ where $V = H^1(\Omega).$ 
In the rest of this paper, we shall consider norms
also proposed in \cite{Zulehner2011} as follows
\begin{align}
  ((v,q), (w,r))_{\hat{X}} = (v,w)_{\hat{V}} + (q,r)_Q, \label{eqn:normX}
\end{align}
where $(\cdot, \cdot)_{\hat{V}}$ and $(\cdot, \cdot)_Q$ are inner products of
Hilbert spaces $\hat{V}$ and $Q,$ respectively.  We will see shortly that such
norms lead to block diagonal preconditioners.  The boundedness condition for the
mixed problem for truncated and untruncated problem is given as follows
\begin{align}
  \sup_{0 \neq z \in \hat{X}} \sup_{0 \neq y \in \hat{X}}
  \frac{\hat{\mathcal{B}}(z,y)}{\|z\|_{\hat{X}} \|y\|_{\hat{X}}} \leq \sup_{0
    \neq z \in X} \sup_{0 \neq y \in X} \frac{\mathcal{B}(z,y)}{\|z\|_X \|y\|_X}
  \leq \bar{c}_x < \infty. \label{eqn:bound}
\end{align}
However, for well-posedness, following well known Babuska-Brezzi condition needs
to be satisfied
\begin{align}
  \inf_{0 \neq z \in \hat{X}} \sup_{0 \neq y \in \hat{X}}
  \frac{\mathcal{\hat{B}}(z,y)}{\|z\|_{\hat{X}} \|y\|_{\hat{X}}} \geq \inf_{0
    \neq z \in X} \sup_{0 \neq y \in X} \frac{\mathcal{B}(z,y)}{\|z\|_X \|y\|_X}
  \geq \underbar{c}_x > 0. \label{eqn:bb}
\end{align}  
Here \eqref{eqn:bound} and \eqref{eqn:bb} are consequence of the fact that
$\hat{X} \subset X$ as $\hat{V} \subset V.$ We shall provide equivalent
conditions for \eqref{eqn:bound} and \eqref{eqn:bb} that are easier to check,
but more importantly it leads to optimal norms.  But first we need to introduce
some notations for operators corresponding to bilinear forms.  It is easy to see
that $\hat{V} \times H^1(\Omega)$ is a Hilbert space itself as $\hat{V}$ and $H^1(\Omega)$
are themselves Hilbert spaces. It is convenient to associate linear operators
for the bilinear forms $a,b,$ and $c$ as follows
\begin{equation}
\begin{aligned} \label{eqn:ops}
 \langle \hat{A}w, v \rangle &= \hat{a}(w,v), \quad \hat{A} \in L(\hat{V}, \hat{V}^*), \\
 \langle \hat{B}w, q \rangle &= \hat{b}(w,q), \quad \hat{B} \in L(\hat{V}, Q^*), \\
 \langle Cr, q \rangle &= c(w,v), \quad C \in L(Q, Q^*), \\
 \langle \hat{B}^*r, v \rangle &= \langle \hat{B}v, r \rangle, \quad \hat{B}^* \in L(Q, \hat{V}^*).
\end{aligned}
\end{equation}
Consequently $\mathcal{B}$ and $\mathcal{F}$ in operator notation are given as
follows
\begin{align}
 \hat{\mathcal{A}} = \begin{pmatrix}
                \hat{A} & \hat{B}^* \\
		\hat{B} & -C  
               \end{pmatrix}, \quad \mathcal{F} = 
               \begin{pmatrix}
                f \\ g
               \end{pmatrix}, \quad x = 
               \begin{pmatrix}
                u \\ p
               \end{pmatrix}.               
\end{align}
The problem \eqref{eqn:mixed_bilin} is now given in operator notation as follows
\begin{align}
 \hat{\mathcal{A}}x = \mathcal{F}. \label{eqn:AxF}
\end{align}
The corresponding untruncated problem is given as follows
\begin{align}
 \mathcal{A}x = \mathcal{F}, \label{eqn:AxF2}
\end{align}
where $\mathcal{A}$ consists of untruncated matrix $A$ in place of $\hat{A}$ and 
$B$ instead of $\hat{B}.$

In \cite{Zulehner2011}, starting from the abstract theory on Hilbert spaces that
lead to representation of isometries, a preconditioner is proposed; it is based
on non-standard norms or isometries that correspond to block diagonal
preconditioner of the following form
\begin{align}
  \mathcal{B} =
  \begin{pmatrix}
    \mathcal{I}_V & \\
    & \mathcal{I}_Q
  \end{pmatrix}.
\end{align}

\subsection{\label{sec:norms} Choice of norm: a brief introduction to Zulehner's idea}
In the discrete case, we unavoidably introduce an additional parameter, i.e, the
mesh size $h,$ in addition to the problem parameters $\tau$ and $\epsilon.$ Our
goal is look for norms that are independent of all these parameters. The content
of this section and the notations are inspired from \cite{Zulehner2011}.
% , but introduce further modification
% and approximation of these norms that suits our problem. Moreover, such
% approximations are motivated by the quest for iterative methods (as direct
% methods are unfeasable) that are applied cheaply for example via existing
% solvers such as Multigrid, for which retaining essential properties such as
% positive definiteness or $M-$matrix property is essential. 

Before we move further, we introduce some notations.  The duality pairing
$\langle \cdot, \cdot \rangle_H$ on $H^* \times H$ is defined as follows
\begin{align*}
  \langle \ell, x \rangle_H = \ell(x) \quad \mbox{for all}~\ell \in H^*, \: x
  \in H.
\end{align*}
Let $\mathcal{I}_H:H \rightarrow H^*$ be an isometric isomorphism defined as
follows
\begin{align*}
  \langle \mathcal{I}_Hx, y \rangle = (x,y)_H.
\end{align*}
The inverse $\mathcal{R}_H = \mathcal{I}^{-1}$ is Riesz-isomorphism, by which
functionals in $H^*$ can be identified with elements in $H$ and we have
\begin{align*}
  \langle \ell, x \rangle = (\mathcal{R}_H\ell, x)_H.
\end{align*}

We already
chose the norm \eqref{eqn:normX}, we now look for explicit representation of
isometries or norms in finite dimension.  For this norm, we briefly describe how
the norms are derived. The main ingredient is the following theorem.
\begin{theorem}[Zulehner 2011 \cite{Zulehner2011}]
  If there are constants $\underline{\gamma}_v, \overline{\gamma}_v,
  \underline{\gamma}_q, \overline{\gamma}_q > 0$ such that
 \begin{align}
   \underline{\gamma}_v \|w\|^2_V \leq a(w,w) + \| Bw \|^2_{Q^*} \leq
   \overline{\gamma}_v \| w\|^2_V, \quad \forall w \in V \label{eqn:infsup1}
 \end{align}
 and
 \begin{align}
   \underline{\gamma}_q \| r \|^2_Q \leq c(r,r) + \| B^*r \|^2_{V^*} \leq
   \overline{\gamma}_q \| r \|^2_Q, \quad \forall r \in Q \label{eqn:infsup2}
 \end{align}
then  
\begin{align} 
  \underline{c}_x \| z \|_X \leq \| \mathcal{A} z \|_{X^*} \leq \overline{c}_x
  \| z \|_X, \quad \forall z \in X \label{eqn:infsupA}
\end{align}
is satisfied with constants $\underline{c}_x, \overline{c}_x >0$ that depend
only on $\underline{\gamma}_v, \overline{\gamma}_v, \underline{\gamma}_q,
\overline{\gamma}_q.$ And, vice versa, if the estimates \eqref{eqn:infsupA} are
satisfied with constants $\underline{c}_x, \overline{c}_x > 0,$ then the
estimates \eqref{eqn:infsup1} and \eqref{eqn:infsup2} are satisfied.
\end{theorem}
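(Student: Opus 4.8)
The plan is to prove the two implications separately. Throughout I use that the dual of $X=V\times Q$ with the norm \eqref{eqn:normX} is $X^*=V^*\times Q^*$ with the product norm, so that for $z=(w,r)$ the identity $\langle\mathcal{A}z,(v,q)\rangle=\mathcal{B}(z,(v,q))=\langle Aw+B^*r,v\rangle+\langle Bw-Cr,q\rangle$ gives $\|\mathcal{A}z\|_{X^*}^2=\|Aw+B^*r\|_{V^*}^2+\|Bw-Cr\|_{Q^*}^2$. I would first record the boundedness constants contained in the right-hand inequalities of \eqref{eqn:infsup1}--\eqref{eqn:infsup2}: since $a$ is symmetric positive and $c$ is symmetric positive semidefinite, Cauchy--Schwarz for these forms yields $\|Aw\|_{V^*}^2\le\overline\gamma_v\,a(w,w)$ and $\|Cr\|_{Q^*}^2\le\overline\gamma_q\,c(r,r)$, while $\|Bw\|_{Q^*}^2\le\overline\gamma_v\|w\|_V^2$ and $\|B^*r\|_{V^*}^2\le\overline\gamma_q\|r\|_Q^2$ are immediate from \eqref{eqn:infsup1}--\eqref{eqn:infsup2} because $a(w,w),c(r,r)\ge 0$. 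The upper bound in \eqref{eqn:infsupA} then follows at once from the triangle inequality, with $\overline c_x$ depending only on $\overline\gamma_v,\overline\gamma_q$.

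For the lower bound in \eqref{eqn:infsupA} --- the one nontrivial part of the forward implication --- I would test with $y=(w,-r)$; then $\|y\|_X=\|z\|_X$, the two $b$-contributions cancel, and $\mathcal{B}(z,(w,-r))=a(w,w)+c(r,r)$, whence $a(w,w)+c(r,r)\le\|\mathcal{A}z\|_{X^*}\,\|z\|_X$. Writing $B^*r=(Aw+B^*r)-Aw$ and $Bw=(Bw-Cr)+Cr$ and using the boundedness estimates above gives $\|B^*r\|_{V^*}^2\le 2\|\mathcal{A}z\|_{X^*}^2+2\overline\gamma_v\,a(w,w)$ and $\|Bw\|_{Q^*}^2\le 2\|\mathcal{A}z\|_{X^*}^2+2\overline\gamma_q\,c(r,r)$. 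Adding the lower bounds in \eqref{eqn:infsup1} and \eqref{eqn:infsup2}, inserting these two estimates, and again using $a(w,w)+c(r,r)\le\|\mathcal{A}z\|_{X^*}\|z\|_X$, I arrive at a scalar inequality of the shape
\[
 \min(\underline\gamma_v,\underline\gamma_q)\,\|z\|_X^2 \;\le\; \mu\,\|\mathcal{A}z\|_{X^*}\,\|z\|_X \;+\; 4\,\|\mathcal{A}z\|_{X^*}^2,
\]
with $\mu$ depending only on $\overline\gamma_v,\overline\gamma_q$. Dividing by $\|z\|_X^2$ and solving the resulting quadratic in $\|\mathcal{A}z\|_{X^*}/\|z\|_X$ yields the positive root as the constant $\underline c_x$, which depends only on $\underline\gamma_v,\overline\gamma_v,\underline\gamma_q,\overline\gamma_q$.

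For the converse, assume \eqref{eqn:infsupA}. Testing it with $z=(w,0)$ gives $\underline c_x^2\|w\|_V^2\le\|Aw\|_{V^*}^2+\|Bw\|_{Q^*}^2\le\overline c_x^2\|w\|_V^2$; in particular $\|Av\|_{V^*}\le\overline c_x\|v\|_V$ and $\|Bv\|_{Q^*}\le\overline c_x\|v\|_V$ for all $v$, hence $a(v,v)\le\overline c_x\|v\|_V^2$ and, by Cauchy--Schwarz for $a$, $\|Aw\|_{V^*}^2\le\overline c_x\,a(w,w)$. Combining, $\underline c_x^2\|w\|_V^2\le\overline c_x\,a(w,w)+\|Bw\|_{Q^*}^2$ and $a(w,w)+\|Bw\|_{Q^*}^2\le(\overline c_x+\overline c_x^2)\|w\|_V^2$, which is \eqref{eqn:infsup1} with constants depending only on $\underline c_x,\overline c_x$. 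The symmetric test $z=(0,r)$, together with Cauchy--Schwarz for the semidefinite form $c$ ($\|Cr\|_{Q^*}^2\le\overline c_x\,c(r,r)$), yields \eqref{eqn:infsup2} in the same way.

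The hard part is the lower bound in the forward direction: a single naive choice of test function does not close the estimate, and the key idea is to pair the test function $(w,-r)$ with the ``boundedness-by-energy'' estimates $\|Aw\|_{V^*}^2\lesssim a(w,w)$, $\|Cr\|_{Q^*}^2\lesssim c(r,r)$, so that after invoking \eqref{eqn:infsup1}--\eqref{eqn:infsup2} everything collapses into a scalar quadratic inequality whose constants are explicit in the four $\gamma$'s. The remaining manipulations are routine applications of the triangle and Young inequalities and of duality of norms.
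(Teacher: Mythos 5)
Your proof is correct, but note that the paper itself offers no proof of this statement: it is quoted verbatim from Zulehner \cite{Zulehner2011} and used as a black box, so there is no in-paper argument to compare against. Your self-contained derivation holds up: the identification $\|\mathcal{A}z\|_{X^*}^2=\|Aw+B^*r\|_{V^*}^2+\|Bw-Cr\|_{Q^*}^2$ is legitimate for the product norm \eqref{eqn:normX}; the test element $(w,-r)$ does cancel the two $b$-contributions and gives $a(w,w)+c(r,r)\le\|\mathcal{A}z\|_{X^*}\,\|z\|_X$; the splittings $B^*r=(Aw+B^*r)-Aw$ and $Bw=(Bw-Cr)+Cr$, combined with the energy bounds $\|Aw\|_{V^*}^2\le\overline{\gamma}_v\,a(w,w)$ and $\|Cr\|_{Q^*}^2\le\overline{\gamma}_q\,c(r,r)$, close the lower bound through the scalar quadratic inequality, whose positive root indeed depends only on the four $\gamma$'s; and the converse via the tests $(w,0)$ and $(0,r)$ is standard. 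The one hypothesis you use beyond the displayed inequalities is the symmetry and nonnegativity of $a$ and $c$ (needed for the Cauchy--Schwarz estimates of the form $\|Aw\|_{V^*}^2\le\overline{\gamma}_v\,a(w,w)$); this is part of Zulehner's setting and is available here by Theorem \ref{thm:properties}, so the argument is complete, though it would be worth stating explicitly. Compared with Zulehner's original treatment, which obtains the equivalence through an operator-theoretic analysis leading to the spectral characterizations $\mathcal{I}_V\sim A+B^*\mathcal{I}_Q^{-1}B$, $\mathcal{I}_Q\sim C+B\mathcal{I}_V^{-1}B^*$ and sharper constants, your route is more elementary and gives cruder constants; since the paper only needs the existence of $\underline{c}_x,\overline{c}_x$ depending on $\underline{\gamma}_v,\overline{\gamma}_v,\underline{\gamma}_q,\overline{\gamma}_q$, that loss is immaterial here.
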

In view of \eqref{eqn:bound} and \eqref{eqn:bb} and recalling $\hat{X} \subset
X,$ the following bounds hold for truncated system
\begin{align}
  \| \hat{A}\hat{z} \|_{\hat{X}^*} \geq \| Az \|_{X^*}, \quad \| \hat{A}\hat{z} \|_{\hat{X}^*} \leq \| Az \|_{X^*},
  \quad \forall \hat{z} \in \hat{X} \subset X, z \in X.
\end{align}

In \cite{Zulehner2011}, the terms $\| Bw \|^2_{Q^*}$ and $\| B^*r \|^2_{V^*}$ in
\eqref{eqn:infsup1} and \eqref{eqn:infsup2} respectively are defined as follows:
  \begin{align*}
    \|B w\|^2_{Q^*} = \langle B^*\mathcal{I}^{-1}_QBw,w \rangle, \quad
    \|B^*r\|^2_{V^*} = \langle B\mathcal{I}^{-1}_VB^*r, r \rangle.
  \end{align*}
  Then \eqref{eqn:infsup1} and \eqref{eqn:infsup2} are equivalently written as
  follows
\begin{align*}
  \underline{\gamma}_v \langle \mathcal{I}_V w, w \rangle & \leq \langle (A + B^*
  \mathcal{I}^{-1}_Q B)w, w \rangle \leq
  \bar{\gamma}_v \langle \mathcal{I}_V w, w \rangle \quad \mbox{for all}~w \in V, \\
  \underline{\gamma}_q \langle \mathcal{I}_Q r, r \rangle & \leq \langle (C + B
  \mathcal{I}^{-1}_V B^*)r, r \rangle \leq \bar{\gamma}_q \langle \mathcal{I}_Q
  r, r \rangle \quad \mbox{for all}~r \in Q.
\end{align*}
In short, in new notation $\sim$ meaning ``spectrally similar'', we obtain the
following equivalent conditions for isometries
\begin{align*}
  \mathcal{I}_V                                           & \sim A + B^*\mathcal{I}^{-1}_Q B \quad \mbox{and} \quad \mathcal{I}_Q \sim C + B \mathcal{I}^{-1}_V B^* \\
  \iff \mathcal{I}_V & \sim A + B^* (C + B \mathcal{I}^{-1}_V B^*)^{-1} B \quad
  \mbox{and} \quad \mathcal{I}_Q \sim C + B
  \mathcal{I}^{-1}_V B^*                                                                                                                                            \\
  \iff \mathcal{I}_Q & \sim C + B (A + B^* \mathcal{I}^{-1}_Q B)^{-1} B \quad
  \mbox{and} \quad \mathcal{I}_V \sim A + B^* \mathcal{I}^{-1}_Q B
\end{align*}
Let $M$ and $N$ be any SPD matrices, consequently, they define inner products
and a Hilbert space structure in $\mathbb{R}^n.$ The intermediate
Hilbert spaces between $M$ and $N$ are given as follows
\begin{align*}
  [M,N]_\theta=M^{1/2}(M^{-1/2}NM^{-1/2})^\theta M^{1/2}, \quad \theta \in
  [0,1].
\end{align*}
Continuing from above, in the case when $A$ and $C$ are non singular, the more
generic form of the norms are given by the following lemma
\begin{lemma}
  \label{I_V_and_I_Q}
  Let $A,C$ be nonsingular. Then 
  \begin{align}
    \mathcal{I}_V = A + [A, B^TC^{-1}B]_\theta, \quad \mathcal{I}_Q = C + [C,
    BA^{-1}B^T]_{1-\theta}, \quad \theta \in [0,1].
  \end{align}
\end{lemma}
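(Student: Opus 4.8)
The plan is to show that the pair $(\mathcal{I}_V,\mathcal{I}_Q)$ given by these formulas consists of SPD operators that satisfy the hypotheses \eqref{eqn:infsup1}--\eqref{eqn:infsup2} of the theorem of Zulehner cited above with constants $\underline\gamma_v,\overline\gamma_v,\underline\gamma_q,\overline\gamma_q$ that are absolute (in particular independent of $\epsilon$, $\tau$, $h$); by that theorem this yields \eqref{eqn:infsupA}, i.e.\ optimality of the block diagonal preconditioner $\mathrm{diag}(\mathcal{I}_V,\mathcal{I}_Q)$. Using the reformulation of \eqref{eqn:infsup1}--\eqref{eqn:infsup2} derived in Section~\ref{sec:norms}, it suffices to prove the coupled spectral equivalences
\[
  \mathcal{I}_V \;\sim\; A + B^T\mathcal{I}_Q^{-1}B, \qquad \mathcal{I}_Q \;\sim\; C + B\mathcal{I}_V^{-1}B^T ;
\]
and since the two defining formulas are interchanged by the substitution $(A,C,B,\theta)\mapsto(C,A,B^T,1-\theta)$, which also swaps these two equivalences, it is enough to establish the first one. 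The ingredients I would use are the standard properties of the weighted operator geometric mean $[M,N]_\theta = M\#_\theta N$: it is SPD; $[M,N]_0=M$, $[M,N]_1=N$, $[M,N]_\theta=[N,M]_{1-\theta}$, $[I,N]_\theta=N^\theta$; and congruence invariance $Z[M,N]_\theta Z^T = [ZMZ^T, ZNZ^T]_\theta$ for invertible $Z$ (equivalently $[M,N]_\theta^{-1}=[M^{-1},N^{-1}]_\theta$).

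First I would diagonalize the relevant matrix pencil. Since $A$ is SPD, there is an invertible $X$ with $X^TAX=I$ and $X^T(B^TC^{-1}B)X = D := \mathrm{diag}(\mu_1,\dots,\mu_n)$; in the setting of this paper $B=M$ is SPD, so all $\mu_i>0$ and $R:=BX$ is invertible with $R^TC^{-1}R=D$, hence $C=RD^{-1}R^T$ and $BA^{-1}B^T = BXX^TB^T = RR^T$. Congruence invariance then gives on the one hand $X^T\mathcal{I}_V X = I + [I,D]_\theta = I + D^\theta$, and on the other hand $[C,BA^{-1}B^T]_{1-\theta} = R[D^{-1},I]_{1-\theta}R^T = RD^{-\theta}R^T$, so that $\mathcal{I}_Q = C + [C,BA^{-1}B^T]_{1-\theta} = R(D^{-1}+D^{-\theta})R^T$. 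Therefore
\[
  X^T\!\left(A + B^T\mathcal{I}_Q^{-1}B\right)X = I + R^T\mathcal{I}_Q^{-1}R = I + (D^{-1}+D^{-\theta})^{-1},
\]
a diagonal matrix whose $i$-th entry equals $(1+\mu_i^{1-\theta}+\mu_i)/(1+\mu_i^{1-\theta})$, to be compared with the $i$-th entry $1+\mu_i^\theta$ of $X^T\mathcal{I}_V X$.

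It then remains to prove the elementary scalar estimate
\[
  \tfrac12 \;\le\; \frac{1+t^{1-\theta}+t}{(1+t^{1-\theta})(1+t^\theta)} \;\le\; 1 \qquad (t\ge 0,\ \theta\in[0,1]),
\]
whose upper bound is immediate and whose lower bound is equivalent to $t^\theta\le 1+t^{1-\theta}+t$, which follows from the weighted AM--GM inequality $t^\theta = 1^{1-\theta}t^\theta\le (1-\theta)+\theta t\le 1+t$. Applying this entrywise to the two diagonal matrices above and undoing the congruence by $X$ gives $\tfrac12\,\mathcal{I}_V \le A + B^T\mathcal{I}_Q^{-1}B \le \mathcal{I}_V$, i.e.\ \eqref{eqn:infsup1} with $\underline\gamma_v=\tfrac12$, $\overline\gamma_v=1$; the symmetric substitution yields \eqref{eqn:infsup2} with $\underline\gamma_q=\tfrac12$, $\overline\gamma_q=1$, and the theorem of Zulehner then completes the argument with absolute constants $\underline c_x,\overline c_x$.

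The step I expect to be the main obstacle is the bookkeeping in the middle paragraph: one must keep straight which congruence simplifies which (non-inverted) block, and in particular notice that $[C,BA^{-1}B^T]_{1-\theta}$ and the sandwich $B^T\mathcal{I}_Q^{-1}B$ reduce to diagonal form only after the correct use of $R$ and $R^{-1}$ rather than $X$. A minor loose end is the case where $B$ is rank deficient, in which some $\mu_i$ vanish and $R^{-1}$ is unavailable; this can be handled by restricting to $\mathrm{range}(B^T)$ (the surplus kernel directions being harmless in \eqref{eqn:infsup1}--\eqref{eqn:infsup2}) or by a density/perturbation argument, and it does not occur for the mass matrix $B=M$.
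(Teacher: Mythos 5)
Your argument is correct, and it is worth noting that the paper itself does not actually prove this lemma: its ``proof'' is only a pointer to Zulehner (pp.~547--548). What you wrote is essentially a faithful, self-contained reconstruction of that cited argument, in the form the surrounding text needs it: you read the displayed formulas as the claim that $\mathcal{I}_V,\mathcal{I}_Q$ satisfy the coupled spectral equivalences $\mathcal{I}_V \sim A + B^T\mathcal{I}_Q^{-1}B$ and $\mathcal{I}_Q \sim C + B\mathcal{I}_V^{-1}B^T$ with parameter-independent constants, reduce everything by simultaneous congruence ($X^TAX=I$, $X^T(B^TC^{-1}B)X=D$, $R=BX$) together with the congruence invariance of the weighted geometric mean to diagonal matrices, and finish with the scalar bound $\tfrac12 \le (1+t^{1-\theta}+t)/\bigl((1+t^{1-\theta})(1+t^\theta)\bigr) \le 1$, proved by weighted AM--GM. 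I checked the bookkeeping: $X^T\mathcal{I}_VX=I+D^\theta$, $\mathcal{I}_Q=R(D^{-1}+D^{-\theta})R^T$, and $X^T(A+B^T\mathcal{I}_Q^{-1}B)X=I+(D^{-1}+D^{-\theta})^{-1}$ are all right, the constants $(\underline\gamma_v,\overline\gamma_v)=(\tfrac12,1)$ are correct (and essentially sharp, e.g.\ $\theta=1$, $t\to\infty$), and the substitution $(A,C,B,\theta)\mapsto(C,A,B^T,1-\theta)$ legitimately disposes of the second equivalence since the hypotheses are symmetric under it. The only loose end is the invertibility of $B$, which your identity $C=RD^{-1}R^T$ genuinely needs; you flag this yourself, and it is harmless in this paper because the lemma is invoked for the untruncated simplified system where $B=M$ is SPD, while a perturbation/limiting argument (constants $\tfrac12,1$ being uniform) covers singular $B$ if one wants the lemma in full generality.
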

\begin{proof}
  See \cite{Zulehner2011}[p. 547-548].
\end{proof}
Before we propose preconditioners, we shall need some properties of the $(1,1)$
block of $\mathcal{A},$ and that for the negative Schur complement $S = C +
\hat{B} \hat{A}^{-1}\hat{B}^T$ in \eqref{eqn:AxF}. These properties are used to
prove some bounds and to suggest approximation of norms.

\subsubsection{Properties of truncated $(1,1)$ block and Schur complement}
An important property that we shall need shortly when analyzing preconditioners
is that the eigenvalues of the truncated matrix is bounded from above and below
by the eigenvalues of the untruncated matrix. In this subsection, we also assume
that the grid is uniform. 

A result that we need later is the following.
\begin{lemma}
  \label{spd}
  $A, A + B$ is SPD.
\end{lemma}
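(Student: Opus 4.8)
The plan is to use the compact form $A = \epsilon(K + mm^T)$ and $B = M$ recorded in \eqref{eqn:short_not}--\eqref{eqn:stiff}. Symmetry of $A$ and of $A+B$ is immediate, since $K$, $M$, and $mm^T$ are each symmetric, so the entire content is positive definiteness. The key is that the rank-one term $mm^T$ "fills in" the one-dimensional kernel of the stiffness matrix.

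First I would collect two structural facts about $K = \big((\nabla\lambda_p,\nabla\lambda_q)\big)_{p,q\in\mathcal{N}_h}$. It is symmetric positive semidefinite: for $u_h = \sum_p x_p\lambda_p$ one has $x^T K x = (\nabla u_h, \nabla u_h) \ge 0$. And $\ker K = \operatorname{span}\{\mathbf{1}\}$: the partition-of-unity identity $\sum_p \lambda_p \equiv 1$ gives $K\mathbf{1} = 0$, while $x^T K x = 0$ forces $\nabla u_h = 0$, i.e.\ $u_h$ is constant on the connected domain $\Omega$, hence $x \in \operatorname{span}\{\mathbf{1}\}$. I would also note that the entries $m_p = \langle\lambda_p,1\rangle = \int_\Omega \lambda_p\,dx$ are strictly positive and that $m^T\mathbf{1} = \int_\Omega \sum_p\lambda_p\,dx = |\Omega| \neq 0$.

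Then, for arbitrary $x \neq 0$, $x^T A x = \epsilon\big(x^T K x + (m^T x)^2\big) \ge 0$; if equality held we would have both $x^T K x = 0$ and $m^T x = 0$, so $x = c\,\mathbf{1}$ by the kernel characterization, and then $0 = m^T x = c\,|\Omega|$ forces $c = 0$, contradicting $x \neq 0$. Hence $A$ is SPD --- this is precisely the algebraic counterpart of the coercivity of $\hat a(\cdot,\cdot)$ in Theorem~\ref{thm:properties}. For $A+B$: the mass matrix $M = \big(\langle\lambda_p,\lambda_q\rangle\big)_{p,q\in\mathcal{N}_h}$ is the Gram matrix of the linearly independent nodal basis and therefore SPD; since the sum of an SPD matrix and a symmetric positive semidefinite matrix is SPD, $A + B = A + M$ is SPD.

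I do not anticipate a genuine obstacle: the only step requiring any care is the identification $\ker K = \operatorname{span}\{\mathbf{1}\}$ together with $m^T\mathbf{1} \neq 0$, which is exactly what lets the rank-one perturbation $mm^T$ promote the semidefinite $K$ to the definite $A$. Everything else is the standard fact that finite element mass matrices are SPD and that sums of positive (semi)definite matrices inherit definiteness.
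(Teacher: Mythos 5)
Your proof is correct and follows essentially the same route as the paper's: the stiffness part $K$ is SPSD with kernel spanned by $\mathbf{1}$, the rank-one term $mm^T$ is positive on that kernel (since $m^T\mathbf{1}=|\Omega|\neq 0$), so $A=\epsilon(K+mm^T)$ is SPD, and adding the SPD mass matrix $B=M$ preserves definiteness. You simply spell out the kernel characterization and the Gram-matrix argument that the paper leaves implicit.
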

\begin{proof}
  We have $A = \epsilon(K + mm^T), \epsilon > 0.$ $A$ is SPSP except on the
  vector $1 = [1,1,1,\dots,1]^T$ which is in the kernel of $A$ but $(mm^T1,
  1)>0.$ Also, since $B = M$ is SPD the proof follows.
\end{proof}

\begin{lemma}[Permutation preserves eigenvalues]
  \label{lem:perm_eig}
  Let $P \in \mathbb{Z}^{n \times n}$ be a permutation matrix, then $P^T \hat{A}
  P$ and $\hat{A}$ are similar.
\end{lemma}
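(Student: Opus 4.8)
The plan is to observe that a permutation matrix is orthogonal, so that conjugation by $P$ is literally a similarity transformation and nothing more needs to be said. First I would recall the one structural fact required: the columns of a permutation matrix $P \in \mathbb{Z}^{n \times n}$ are the standard basis vectors $e_1,\dots,e_n$ in some order, hence $P^T P = P P^T = Id$; equivalently $P$ is invertible with $P^{-1} = P^T$.

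Next I would simply rewrite $P^T \hat{A} P = P^{-1} \hat{A} P$. By the definition of matrix similarity — two matrices $X, Y \in \mathbb{R}^{n \times n}$ are similar if there is an invertible $S$ with $Y = S^{-1} X S$ — taking $S = P$ exhibits $P^T \hat{A} P$ and $\hat{A}$ as similar. In particular they share the same characteristic polynomial, and therefore the same eigenvalues with the same algebraic and geometric multiplicities; this is precisely the consequence that is invoked later when one reorders the degrees of freedom to group the active nodes together without altering the spectrum of the truncated operator. I would also note in passing that symmetry is preserved, $(P^T \hat{A} P)^T = P^T \hat{A}^T P = P^T \hat{A} P$, so the reordered matrix is again symmetric, consistent with its intended use.

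There is essentially no obstacle here; the only point worth flagging is not to overclaim or underclaim the relationship: because $P^T = P^{-1}$, the conjugation is in fact an \emph{orthogonal} similarity, but ordinary similarity already delivers the eigenvalue statement, which is all that Lemma~\ref{lem:perm_eig} asserts and all that the later analysis of the truncated $(1,1)$ block and Schur complement needs.
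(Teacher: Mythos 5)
Your proposal is correct and follows exactly the paper's argument: the paper's proof is the one-line observation that $P^TP = Id$, so $P^T\hat{A}P = P^{-1}\hat{A}P$ is a similarity transformation, which you simply spell out in more detail (including the harmless extra remarks on orthogonality and preserved symmetry).
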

\begin{proof}
  $P$ being a permutation matrix, $P^TP = Id,$ hence the proof.
\end{proof}
\begin{lemma}[Poincare separation theorem for eigenvalues]
  \label{lem:poincare}
  Let $Z \in \mathbb{R}^{n \times n}$ be any symmetric matrix with eigenvalues
  $\lambda_1 \leq \lambda_2 \leq \cdots \lambda_n,$ and let $P$ be a
  semi-orthogonal $n \times k$ matrix such that $P^TP = Id \in \mathbb{R}^{k
    \times k}.$ Then the eigenvalues $\mu_1 \leq \mu_2 \cdots \mu_{n-k+i}$ of
  $P^TZP$ are separated by the eigenvalues of $Z$ as follows
  \begin{align}
    \lambda_i \leq \mu_i \leq \lambda_{n-k+i}.
  \end{align}
\end{lemma}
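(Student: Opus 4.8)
The plan is to obtain this as a direct consequence of the Courant--Fischer min--max characterization of the eigenvalues of a symmetric matrix, applied both to $Z$ and to $P^{T}ZP$. Recall that for a symmetric matrix $Z \in \mathbb{R}^{n\times n}$ with eigenvalues $\lambda_1 \leq \cdots \leq \lambda_n$ one has, for each $i$,
\begin{align*}
  \lambda_i \;=\; \min_{\dim U = i}\ \max_{0 \neq x \in U} \frac{x^{T}Zx}{x^{T}x}
           \;=\; \max_{\dim W = n-i+1}\ \min_{0 \neq x \in W} \frac{x^{T}Zx}{x^{T}x},
\end{align*}
where $U$ and $W$ range over linear subspaces of $\mathbb{R}^{n}$ of the indicated dimension; the analogous formulas hold for $P^{T}ZP \in \mathbb{R}^{k\times k}$ with its eigenvalues $\mu_1 \leq \cdots \leq \mu_k$ (the indexing in the statement is a typo: $P^{T}ZP$ has $k$ eigenvalues $\mu_1 \leq \cdots \leq \mu_k$, and the claim is $\lambda_i \leq \mu_i \leq \lambda_{n-k+i}$ for $i = 1,\dots,k$).

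First I would record the elementary consequence of $P^{T}P = Id$: for every $y \in \mathbb{R}^{k}$ we have $\|Py\|^2 = y^{T}P^{T}Py = \|y\|^2$, so $P$ is injective and norm preserving, and the Rayleigh quotient transfers exactly,
\begin{align*}
  \frac{y^{T}(P^{T}ZP)y}{y^{T}y} \;=\; \frac{(Py)^{T}Z(Py)}{(Py)^{T}(Py)}, \qquad 0 \neq y \in \mathbb{R}^{k}.
\end{align*}
Hence, as $y$ runs over an $i$-dimensional subspace $T \subseteq \mathbb{R}^{k}$, the image $x = Py$ runs over the $i$-dimensional subspace $PT$ of $\operatorname{range}(P) \subseteq \mathbb{R}^{n}$, and every $i$-dimensional subspace of $\operatorname{range}(P)$ arises in this way. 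Substituting into the two Courant--Fischer formulas for $P^{T}ZP$ gives the constrained characterizations
\begin{align*}
  \mu_i = \min_{\substack{U \subseteq \operatorname{range}(P)\\ \dim U = i}}\ \max_{0 \neq x \in U} \frac{x^{T}Zx}{x^{T}x}, \qquad
  \mu_i = \max_{\substack{W \subseteq \operatorname{range}(P)\\ \dim W = k-i+1}}\ \min_{0 \neq x \in W} \frac{x^{T}Zx}{x^{T}x}.
\end{align*}

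The two bounds then follow by comparing these constrained extremal problems with the unconstrained ones over all of $\mathbb{R}^{n}$. For the lower bound, the minimum defining $\mu_i$ (min--max form) is taken over a subfamily of all $i$-dimensional subspaces of $\mathbb{R}^{n}$, so it cannot be smaller than the minimum over all of them, whence $\mu_i \geq \lambda_i$. For the upper bound, the maximum defining $\mu_i$ (max--min form) is likewise over a subfamily of all $(k-i+1)$-dimensional subspaces of $\mathbb{R}^{n}$, so $\mu_i \leq \max_{\dim W = k-i+1}\min_{0\neq x \in W}(x^{T}Zx)/(x^{T}x)$; matching $n - j + 1 = k - i + 1$, i.e.\ $j = n-k+i$, the right-hand side is $\lambda_{n-k+i}$, giving $\mu_i \leq \lambda_{n-k+i}$. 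The only point requiring any care is this dimension bookkeeping in the max--min step; once the Rayleigh-quotient transfer is in place everything is immediate from $P^{T}P = Id$. As an alternative route one could complete $P$ to an orthogonal matrix $[\,P\ \ P_{\perp}\,] \in \mathbb{R}^{n\times n}$, so that $P^{T}ZP$ is the leading $k\times k$ principal submatrix of an orthogonal conjugate of $Z$, and invoke the Cauchy interlacing theorem; but the min--max argument above is the most self-contained and is what I would write out.
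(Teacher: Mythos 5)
Your proof is correct. Note, though, that the paper does not actually prove this lemma at all: its ``proof'' is a one-line citation to Rao (1998, p.~337), so any comparison is between your self-contained argument and a deferred reference. What you write out is the standard derivation via the Courant--Fischer min--max characterization: the identity $P^{T}P = Id$ makes $P$ an isometry onto $\operatorname{range}(P)$, the Rayleigh quotient of $P^{T}ZP$ at $y$ equals that of $Z$ at $Py$, and the eigenvalues $\mu_i$ are therefore the constrained min--max/max--min values over subspaces of $\operatorname{range}(P)$; comparing with the unconstrained extrema over $\mathbb{R}^{n}$ gives $\lambda_i \leq \mu_i$ and, with the dimension count $n-j+1 = k-i+1$ handled correctly, $\mu_i \leq \lambda_{n-k+i}$. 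You also rightly flag that the statement's indexing ``$\mu_1 \leq \mu_2 \cdots \mu_{n-k+i}$'' is a typo for the $k$ eigenvalues $\mu_1 \leq \cdots \leq \mu_k$. Your alternative remark---completing $P$ to an orthogonal matrix so that $P^{T}ZP$ is a principal submatrix of an orthogonal conjugate of $Z$ and invoking Cauchy interlacing---is exactly the route most textbook treatments (including the cited one) take, so either version would serve; the min--max argument you chose has the advantage of being fully self-contained within the paper, which the citation-only proof is not.
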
  
\begin{proof}  
  The theorem is proved in \cite[p. 337]{Rao1998}.
\end{proof}
\begin{lemma}[Eigenvalues of the truncated matrix]
  \label{thm:eig_bound}
  Let $\lambda_1 \leq \lambda_2 \dots \leq \lambda_n$ be the eigenvalues of $A,$
  and let $\hat{\lambda}_1 \leq \hat{\lambda}_2 \dots \leq \hat{\lambda}_n$ be
  the eigenvalues of truncated matrix $\hat{A}.$ Let $k=\sum_{i=1}^n T(i,i)$ be
  the number of untruncated rows in $\hat{A}.$ Let $\hat{\lambda}_{n_1} \leq
  \hat{\lambda}_{n_2} \dots \hat{\lambda}_{n_k}$ be the eigenvalues of $\hat{A}$
  due to addition of $\hat{T}.$ Then the following holds
  \begin{align}
    \lambda_i \leq \hat{\lambda}_{n_i} \leq \lambda_{n-k+i}.
  \end{align}
\end{lemma}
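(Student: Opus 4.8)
The plan is to reduce $\hat{A}$ to block-diagonal form by a permutation, recognize its nontrivial block as a principal submatrix of $A$, and then read off the claimed inequalities as an immediate instance of the Poincar\'e separation theorem (Lemma \ref{lem:poincare}).

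First I would invoke Lemma \ref{lem:perm_eig}. Since $T$ and $\hat{T}$ are complementary diagonal $0/1$ matrices with $k=\sum_i T(i,i)$ ones in $T$, there is a permutation matrix $\Pi$ with $\Pi^T T \Pi = \mathrm{diag}(Id_k,0)$ and $\Pi^T \hat{T}\Pi = \mathrm{diag}(0,Id_{n-k})$, where the first $k$ indices are exactly the untruncated ones. Setting $\tilde{A} = \Pi^T A \Pi$ (which is orthogonally similar to $A$, hence has the same eigenvalues $\lambda_1 \le \dots \le \lambda_n$), one computes
\begin{align*}
  \Pi^T \hat{A}\,\Pi
  = (\Pi^T T \Pi)\,\tilde{A}\,(\Pi^T T \Pi) + \Pi^T \hat{T}\Pi
  = \begin{pmatrix} A' & 0 \\ 0 & Id_{n-k} \end{pmatrix},
\end{align*}
where $A' \in \mathbb{R}^{k\times k}$ is the leading principal submatrix of $\tilde{A}$ on the untruncated indices. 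Thus the spectrum of $\hat{A}$ is the spectrum of $A'$ together with the eigenvalue $1$ of multiplicity $n-k$; the $k$ eigenvalues contributed by the compressed block $A'$ are precisely $\hat{\lambda}_{n_1}\le\dots\le\hat{\lambda}_{n_k}$, while the $n-k$ unit eigenvalues are the artificial ones injected by $\hat{T}$.

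Next I would note that $A' = P^T \tilde{A} P$, where $P \in \mathbb{R}^{n\times k}$ consists of the first $k$ columns of the identity, so $P^T P = Id \in \mathbb{R}^{k\times k}$ and $P$ is semi-orthogonal. Applying Lemma \ref{lem:poincare} with $Z = \tilde{A}$, whose ordered eigenvalues are $\lambda_1 \le \dots \le \lambda_n$, yields exactly
\begin{align*}
  \lambda_i \le \hat{\lambda}_{n_i} \le \lambda_{n-k+i}, \qquad i = 1,\dots,k,
\end{align*}
which is the assertion.

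The argument is essentially bookkeeping layered on top of the two lemmas already established. The only point requiring a little care is the identification of which $k$ of the $n$ eigenvalues of $\hat{A}$ are the ``genuine'' ones $\hat{\lambda}_{n_i}$ — namely those coming from $A'$ rather than from $\hat{T}$ — together with the check that a single permutation $\Pi$ simultaneously puts $T$, $\hat{T}$ into the canonical form, so that $A'$ really is a principal submatrix of a matrix orthogonally similar to $A$ and Lemma \ref{lem:poincare} applies with eigenvalues $\lambda_i$. I do not expect any genuine obstacle here.
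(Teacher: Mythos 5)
Your proposal is correct and takes essentially the same route as the paper: permute so that truncated and untruncated indices are grouped (Lemma \ref{lem:perm_eig}), observe that $\hat{A}$ then becomes block diagonal with an identity block and the compression of the permuted $A$ onto the untruncated indices, and conclude by the Poincar\'e separation theorem (Lemma \ref{lem:poincare}). You are in fact slightly more explicit than the paper on the one point it glosses over, namely that the nontrivial block of the permuted $\hat{A}$ is exactly a principal submatrix of (a matrix similar to) $A$, so the separation theorem is applied with $Z$ having eigenvalues $\lambda_1\leq\dots\leq\lambda_n$.
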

\begin{proof}
  The proof shall follow by application of Poincare separation theorem, to this
  end, we need to reformulate our problem. Let $P$ be a permutation matrix that
  renumbers the rows such that the truncated rows are numbered first, then we
  have
\begin{align}
  P^T\hat{A}P = \begin{pmatrix}
    I & \\
      & R^TP^T\hat{A}PR
  \end{pmatrix},
\end{align}
where $R \in \mathbb{R}^{n \times k}$ is the restriction operator defined as
follows
\begin{align}
  R =
  \begin{pmatrix}
    \begin{pmatrix*}[c]
      0 & 0 & \dots & 0 \\
      \vdots & \hdots & \hdots & 0 \\
      0 & 0 & \dots & 0
    \end{pmatrix*}_{n-k \times k} \\    
    \begin{pmatrix*}[c]
      1 & 0 & \dots & 0 \\
      0 & 1 & \dots & 0 \\
      \vdots & \ddots & \dots & 0 \\
      0 & 0 & \dots & 1
    \end{pmatrix*}_{k \times k}
  \end{pmatrix}.
\end{align}
Clearly $R^T R = Id \in \mathbb{R}^{k \times k}.$ From Lemma \ref{lem:perm_eig}, $P^T
\hat{A} P$ and $\hat{A}$ are similar. From Lemma \ref{lem:poincare}, theorem
follows.
\end{proof}
\begin{corollary}
  From Theorem \ref{thm:eig_bound}, we have $$\lambda_{\mbox{min}}(\hat{A}) \geq
  \lambda_{\mbox{min}}(A)>0,$$ 
$$\lambda_{\mbox{max}}(\hat{A}) \leq
  \lambda_{\mbox{max}}(A),$$
hence $\hat{A}$ is SPD.
  Moreover, $cond(\hat{A}) \leq cond(A).$
\end{corollary}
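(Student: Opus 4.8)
The plan is to read everything off Theorem~\ref{thm:eig_bound} together with Lemma~\ref{spd}, with essentially no new computation. First I would record the structure of the spectrum of $\hat{A}=TAT+\hat{T}$: since $A$ is symmetric and $T,\hat{T}$ are diagonal, $\hat{A}$ is symmetric, and after the permutation $P$ used in the proof of Theorem~\ref{thm:eig_bound} it is block diagonal, consisting of an identity block of size $n-k$ contributed by $\hat{T}$ together with the $k\times k$ compressed block $R^{T}P^{T}\hat{A}PR$ whose ordered eigenvalues are exactly $\hat{\lambda}_{n_1}\le\cdots\le\hat{\lambda}_{n_k}$. Hence the eigenvalues of $\hat{A}$ are the $n-k$ values equal to $1$ together with the $\hat{\lambda}_{n_i}$.

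Next I would apply the interlacing bound $\lambda_i\le\hat{\lambda}_{n_i}\le\lambda_{n-k+i}$ of Theorem~\ref{thm:eig_bound} at the extremes $i=1$ and $i=k$, which yields $\lambda_{\min}(A)=\lambda_1\le\hat{\lambda}_{n_1}$ and $\hat{\lambda}_{n_k}\le\lambda_{n}=\lambda_{\max}(A)$. By Lemma~\ref{spd}, $A$ is SPD, so $\lambda_{\min}(A)>0$; therefore every $\hat{\lambda}_{n_i}$ is strictly positive, and together with the unit eigenvalues and the symmetry of $\hat{A}$ noted above this already shows that $\hat{A}$ is SPD, so that $\lambda_{\min}(\hat{A})\ge\lambda_{\min}(A)>0$.

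Finally, combining the two displayed inequalities with positivity of all quantities gives
\[
  cond(\hat{A})=\frac{\lambda_{\max}(\hat{A})}{\lambda_{\min}(\hat{A})}\le\frac{\lambda_{\max}(A)}{\lambda_{\min}(A)}=cond(A),
\]
which is the last claim.

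The one genuinely delicate point, which I would flag explicitly, is the role of the $n-k$ trivial eigenvalues equal to $1$ introduced by $\hat{T}$: the inequalities $\lambda_{\min}(\hat{A})\ge\lambda_{\min}(A)$ and $\lambda_{\max}(\hat{A})\le\lambda_{\max}(A)$ hold for the \emph{full} matrix only once one knows that $1\in[\lambda_{\min}(A),\lambda_{\max}(A)]$, which is true on a sufficiently fine grid since the extreme eigenvalues of $A=\epsilon(K+mm^{T})$ straddle $1$; alternatively the statement is cleanest when read as a statement about the non-trivial $k\times k$ block of $\hat{A}$, i.e. the part that actually couples to the rest of the reduced system~\eqref{eqn:rls}, in which case the corollary is an immediate consequence of the interlacing theorem and the SPD-ness of $A$ with no side condition at all. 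Everything beyond this observation is routine.
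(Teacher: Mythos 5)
Your core argument is exactly the paper's own route: the corollary is meant as an immediate consequence of Lemma \ref{thm:eig_bound} (interlacing applied at $i=1$ and $i=k$) together with Lemma \ref{spd} for $\lambda_{\min}(A)>0$, and your derivation of SPD-ness and of $\mathrm{cond}(\hat{A})\le \mathrm{cond}(A)$ follows that line with no new ingredients; the paper supplies no further proof. The caveat you flag is genuine and the paper is silent about it: Lemma \ref{thm:eig_bound} only controls the eigenvalues of the compressed block $R^{T}P^{T}\hat{A}PR$, so transferring the bounds to the full matrix $\hat{A}=TAT+\hat{T}$ needs $1\in[\lambda_{\min}(A),\lambda_{\max}(A)]$ to absorb the $n-k$ unit eigenvalues coming from $\hat{T}$. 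However, your suggested repair ``true on a sufficiently fine grid'' does not hold for this problem: for P1 elements in two dimensions the inverse inequality gives $\lambda_{\max}(K)=O(1)$ uniformly in $h$, so $\lambda_{\max}(A)=\epsilon\,\lambda_{\max}(K+mm^{T})=O(\epsilon)$, which stays well below $1$ for the small $\epsilon$ of interest no matter how much the mesh is refined (refinement only pushes $\lambda_{\min}(A)$ down). Consequently, for the full matrix the inequality $\lambda_{\max}(\hat{A})\le\lambda_{\max}(A)$ can actually fail (the eigenvalue $1$ contributed by $\hat{T}$ exceeds $O(\epsilon)$), and with it the stated conditioning bound, although SPD-ness of $\hat{A}$ is unaffected since the unit eigenvalues are positive. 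The clean and correct reading is the alternative you give: the interlacing, positivity, and $\mathrm{cond}$ estimates hold for the nontrivial $k\times k$ block, i.e.\ for $\hat{A}$ restricted to the inactive set, which is the only part that matters in the reduced system \eqref{eqn:rls}; stated that way your proof is complete and needs no side condition.
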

We know that the matrix $M$ is SPD, and $K$ is a SPSD.  In the following we
observe the properties of truncated matrices obtained from these.
\begin{definition}
  Let $G(A)=(V,E)$ be the adjacency graph of a matrix $A \in \mathbb{R}^{N
    \times N}$.  The matrix $A$ is called irreducible if any vertex $i \in V$ is
  connected to any vertex $j \in V$. Otherwise, $A$ is called reducible.
\end{definition}
\begin{definition} \label{def:mmatrix1} A matrix $A \in \mathbb{R}^{N \times N}$
  is called an $M-$matrix if it satisfies the following three properties:
  % \begin{enumerate}
  % \item $a_{ii}>0$ for $i=1,\dots,N$
  % \item $a_{ij}\le 0$ for $i \neq j$, $i,j=1,\dots,N$
  % \item $A$ is non-singular and $A^{-1} \ge 0$
  % \end{enumerate}
  $a_{ii}>0$ for $i=1,\dots,N,$
  $a_{ij}\le 0$ for $i \neq j$, $i,j=1,\dots,N,$ and
  $A$ is non-singular and $A^{-1} \ge 0.$
\end{definition}
\begin{definition}
  A square matrix $A$ is strictly diagonally dominant if the following holds
  \begin{align}
  |a_{ii}| > \sum_{j \neq i}|a_{ij}|, i=1,\dots,N, \label{eqn:diag_dom}
  \end{align}
  and it is called irreducibly diagonally dominant if $A$ is irreducible and the
  following holds
  \begin{align}
    |a_{ii}| \geq \sum_{j \neq i}|a_{ij}|, i=1,\dots,N, \label{eqn:irr_diag_dom} 
  \end{align}
  where strict inequality holds for at least one $i$.
\end{definition}
A simpler criteria for $M-$matrix property is given by the following theorem.
\begin{lemma} \label{thm:mmatrix} If the coefficient matrix $A$ is strictly or
  irreducibly diagonally dominant and satisfies the following conditions
  \begin{enumerate}
  \item $a_{ii}>0$ for $i=1,\dots,N$
  \item $a_{ij}\le 0$ for $i \neq j$, $i,j=1,\dots,N$
  \end{enumerate} then $A$ is an $M-$matrix.
\end{lemma}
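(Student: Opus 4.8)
The statement to prove is Lemma~\ref{thm:mmatrix}: a strictly or irreducibly diagonally dominant matrix with positive diagonal and nonpositive off-diagonal entries is an $M$-matrix.

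\medskip

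The plan is to verify the three defining properties of Definition~\ref{def:mmatrix1} in turn. Two of them, namely $a_{ii}>0$ and $a_{ij}\le 0$ for $i\neq j$, are given by hypothesis, so the entire content of the proof is to establish the remaining property: that $A$ is nonsingular and $A^{-1}\ge 0$ (entrywise). First I would dispose of nonsingularity, which follows from the classical Levy--Desplanques theorem in the strictly diagonally dominant case and from the Taussky theorem for irreducibly diagonally dominant matrices; alternatively, in the irreducible case one uses a Gershgorin-disc argument together with the boundary-point refinement for irreducible matrices to conclude $0$ cannot be an eigenvalue. These are standard and I would simply cite them, since the paper is citing other linear-algebra facts (Poincar\'e separation, etc.) rather than reproving them.

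\medskip

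The substantive step is the nonnegativity of $A^{-1}$. The cleanest route is a splitting argument: write $A = D - N$ where $D = \operatorname{diag}(a_{11},\dots,a_{NN})$ has positive entries and $N\ge 0$ collects the negated off-diagonal entries (nonnegative by hypothesis). Then $D^{-1}\ge 0$ and $A = D(I - D^{-1}N)$ with $D^{-1}N \ge 0$. The diagonal-dominance hypothesis translates, row by row, into a bound on the row sums of $D^{-1}N$: in the strict case $\sum_{j}(D^{-1}N)_{ij} < 1$ for every $i$, so the infinity-norm of $D^{-1}N$ is strictly less than $1$; in the irreducibly dominant case one gets $\le 1$ with strict inequality somewhere, and irreducibility of $N$ (inherited from $A$) upgrades this to spectral radius strictly less than $1$ via the Perron--Frobenius theory for nonnegative irreducible matrices. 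In either case $\rho(D^{-1}N) < 1$, so the Neumann series $(I - D^{-1}N)^{-1} = \sum_{k\ge 0} (D^{-1}N)^k$ converges, and since every term is a product of nonnegative matrices, the sum is nonnegative. Hence $A^{-1} = (I - D^{-1}N)^{-1}D^{-1}$ is a product of two nonnegative matrices, so $A^{-1}\ge 0$. Combining this with the hypotheses on the sign pattern of $A$ gives all three conditions of Definition~\ref{def:mmatrix1}, completing the proof.

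\medskip

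The main obstacle is the irreducibly-diagonally-dominant case: there one only has $\rho(D^{-1}N) \le 1$ from the norm estimate alone, and ruling out $\rho(D^{-1}N) = 1$ genuinely requires the irreducibility assumption plus a Perron--Frobenius argument (an eigenvector for eigenvalue $1$ would, by the equality case in the Perron--Frobenius bound for irreducible nonnegative matrices, force every row sum of $D^{-1}N$ to equal $1$, contradicting the strict inequality that holds for at least one index). In the strictly dominant case this difficulty evaporates since $\|D^{-1}N\|_\infty < 1$ directly. Given the expository level of the surrounding material, I would likely present the splitting and the strict case in full and invoke Perron--Frobenius by citation for the irreducible case.
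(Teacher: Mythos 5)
Your proof is correct; note, however, that the paper itself offers no proof of Lemma~\ref{thm:mmatrix} --- it is stated as a standard criterion for the $M$-matrix property --- so there is no in-paper argument to compare against, and your write-up supplies exactly the classical textbook proof. The regular-splitting route ($A = D - N$ with $D$ the positive diagonal, $N \ge 0$, $A = D(I - D^{-1}N)$, and $A^{-1} = (I - D^{-1}N)^{-1}D^{-1} \ge 0$ via the nonnegative Neumann series once $\rho(D^{-1}N) < 1$) is sound, and you correctly isolate the only delicate point: in the irreducibly dominant case the row-sum estimate alone gives only $\rho(D^{-1}N) \le 1$, and excluding equality genuinely needs irreducibility together with the equality case of the Perron--Frobenius row-sum bounds (with the observation, which you implicitly use and could state explicitly, that $D^{-1}N$ inherits irreducibility from $A$ because left-scaling by a positive diagonal and deleting the diagonal do not change the off-diagonal adjacency graph). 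One small economy: the separate appeal to Levy--Desplanques/Taussky for nonsingularity is redundant, since $\rho(D^{-1}N) < 1$ already makes $I - D^{-1}N$, and hence $A$, invertible; verifying conditions 1 and 2 of Definition~\ref{def:mmatrix1} is immediate from the hypotheses, as you say.
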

 
\begin{remark}
  Note that $K$ is not $M-$matrix because $K \cdot \mathbf{1}=0,$ hence,
  \eqref{eqn:irr_diag_dom} is not satisfied. Moreover, mass matrix $M$ has
  positive off-diagonal entries, hence, it is not an $M-$matrix either. Alternatively,
  item 3. of Definition \ref{def:mmatrix1} is not satisfied.
\end{remark}
Let $\mathcal{N}^{\bullet}_h = \{i \: : \: T(i,i) = 0\}.$ 
\begin{lemma}
  Let $|\mathcal{N}^{\bullet}_h|\geq 1,$ then $\hat{K}, \, P^T \hat{K} P,$ and $R^T
  P^T \hat{K} P R$ are $M-$matrices.
\end{lemma}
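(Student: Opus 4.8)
The plan is to show that $\hat K = TKT + \hat T$ (the truncated stiffness matrix, with unit rows/columns on the active set) is an $M$-matrix by verifying the hypotheses of Lemma~\ref{thm:mmatrix}, and then to transfer this property to $P^T\hat K P$ and $R^TP^T\hat K PR$ via permutation similarity (Lemma~\ref{lem:perm_eig}) and the block structure $P^T\hat K P = \begin{pmatrix} I & 0 \\ 0 & R^TP^T\hat K PR\end{pmatrix}$ already exhibited in the proof of Lemma~\ref{thm:eig_bound}. So the real work is the first claim; the other two are essentially bookkeeping once one notes that the $M$-matrix property is preserved under symmetric permutation, and that a block-diagonal matrix is an $M$-matrix iff each diagonal block is.

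For $\hat K$ itself I would check the three structural conditions. First, sign pattern: for a row $i$ with $T(i,i)=1$ (an inactive node), the row of $TKT$ agrees with the row of $K$ except that entries in columns $j$ with $T(j,j)=0$ are zeroed out; since $K$ has $k_{ii}>0$ and $k_{ij}\le 0$ off-diagonal (it is a P1 FEM Laplacian stiffness matrix on a — by assumption here — uniform mesh, so the standard nonpositive off-diagonal property holds), zeroing some off-diagonal entries preserves $k_{ii}>0$ and $k_{ij}\le 0$; for a row $i$ with $T(i,i)=0$, $\hat T$ makes it $e_i^T$, which trivially has positive diagonal and nonpositive (indeed zero) off-diagonals. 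So conditions 1 and 2 of Lemma~\ref{thm:mmatrix} hold for $\hat K$. Second, diagonal dominance: $K$ is weakly diagonally dominant with $k_{ii} = -\sum_{j\ne i}k_{ij}$ (row sums zero, since $K\mathbf 1 = 0$), so after truncation $k_{ii} \ge \sum_{j\ne i,\,T(j,j)=1}|k_{ij}| = \sum_{j\ne i}|(\hat K)_{ij}|$ for inactive rows, with equality exactly when no neighbor of $i$ is active; active rows are $e_i^T$, which are diagonally dominant with strict inequality. The key point needed is that this is \emph{irreducible} diagonal dominance with strict inequality somewhere: the hypothesis $|\mathcal N^\bullet_h|\ge 1$ guarantees at least one active node exists, hence at least one row is $e_i^T$ giving the required strict inequality, and — provided $\hat K$ is irreducible — Lemma~\ref{thm:mmatrix} then applies. (For the block $R^TP^T\hat K PR$, which is the restriction to inactive nodes, one needs a boundary row that touches an active node; here $|\mathcal N^\bullet_h|\ge 1$ plus connectivity of the mesh graph gives some inactive node adjacent to an active one, producing the strict inequality on that row of the restricted block.)

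The main obstacle, and the step I would spend the most care on, is \textbf{irreducibility}. Lemma~\ref{thm:mmatrix} requires either strict diagonal dominance (which fails — most inactive rows are only weakly dominant) or \emph{irreducible} diagonal dominance, so one genuinely must argue that the relevant matrices have strongly connected adjacency graphs. For $\hat K$ this is false in general as stated (an isolated active node gives a $1\times 1$ diagonal block $e_i^Te_i = 1$ disconnected from the rest), so the honest reading is that each \emph{irreducible diagonal block} is an $M$-matrix, or one invokes that a direct sum of $M$-matrices is an $M$-matrix; I would make this precise rather than claim global irreducibility. For $R^TP^T\hat K PR$ — the genuine Schur-type restriction to the inactive set — irreducibility holds when the inactive subgraph of the (connected) mesh is connected, which is the natural assumption; I would state it that way. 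Once irreducibility is handled per component, $A^{-1}\ge 0$ (condition 3 of Definition~\ref{def:mmatrix1}) follows from Lemma~\ref{thm:mmatrix}, and the permutation/block arguments close out $P^T\hat K P$ and $R^TP^T\hat K PR$ immediately.
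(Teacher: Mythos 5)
Your proposal follows the same basic route as the paper: check the sign pattern and row-wise diagonal dominance of $\hat K=TKT+\hat T$ (strict on the truncated rows, weak on the untruncated ones), invoke the sufficient criterion of Lemma~\ref{thm:mmatrix}, and transfer the property to $P^T\hat K P$ and $R^TP^T\hat K PR$. The difference is in how the irreducibility hypothesis is handled, and there your treatment is actually the more careful one. The paper simply asserts that $\hat K$ is irreducibly diagonally dominant; as you point out, this is literally false whenever there is at least one active and one inactive node, because $\hat K$ has unit rows \emph{and} columns at the active nodes, so those vertices are isolated in the adjacency graph and $\hat K$ is reducible. Your fix --- pass to the block form $P^T\hat K P=\bigl(\begin{smallmatrix}I&0\\0&R^TP^T\hat K PR\end{smallmatrix}\bigr)$, note that a block-diagonal matrix is an $M$-matrix iff its blocks are, and verify irreducible diagonal dominance on the restricted block (each connected component of the inactive subgraph of a connected mesh has a node adjacent to an active node, which supplies the strict inequality) --- repairs this and is the honest version of the argument; you should indeed state it per component rather than assume the inactive subgraph is connected, since disjoint inactive regions do occur for these active-set configurations. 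The other divergence is minor: for $R^TP^T\hat K PR$ the paper invokes the fact that a principal submatrix of an $M$-matrix is an $M$-matrix (citing Horn and Johnson), whereas you verify dominance and the sign conditions on the restricted block directly; both are valid, the paper's citation is shorter, your direct check is self-contained and is what actually carries the strict inequality needed once the global irreducibility claim is dropped. One small caveat: your aside that the off-diagonal entries of $K$ are nonpositive relies on the mesh being nonobtuse/uniform, which the paper does assume in this subsection, so no gap there.
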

\begin{proof}
  Since we have $|\mathcal{N}^{\bullet}_h|\geq 1,$ for all rows in truncated set
  $\mathcal{N}^{\bullet}_h,$ we have strict diagonal dominance
  \begin{align}
    \hat{k}_{ii} = 1 = |\hat{k}_{ii}| = 0 > \sum_{j \neq i}\hat{k}_{ij}, \quad
    \forall i \in \mathcal{N}^{\bullet}_h, \: j=1,\dots,
    |\mathcal{N}_h|, \label{eqn:strict_diag_dom}
  \end{align}
  where as, for rows corresponding to untruncated set $\mathcal{N}_h \setminus
  \mathcal{N}^{\bullet}_h,$
  \begin{align}
    \hat{k}_{ii} =  k_{ii} = |\hat{k}_{ii}| \geq \sum_{j \neq
      i} k_{ij} \geq \sum_{j \neq i}\hat{k}_{ij}, \quad \forall i
    \in \mathcal{N}_h \setminus \mathcal{N}^{\bullet}_h, \: j=1,\dots,
    |\mathcal{N}_h|. \label{eqn:dd}
  \end{align}
  Moreover, we have 
  \begin{align} \label{eqn:entries}
    \hat{k}_{ij} = 
    \begin{cases}(\mbox{when}~i=j)
      \begin{cases}
        1, \quad \forall i \in \mathcal{N}^{\bullet}_h, \\
        k_{ii} > 0, \quad \forall i \in \mathcal{N}_h \setminus
        \mathcal{N}^{\bullet}_h,
      \end{cases}\\
      (\mbox{when}~i \neq j) \quad k_{ij} < 0, \quad \forall i\in\mathcal{N}_h.
    \end{cases}
  \end{align}
  The sufficient conditions of Lemma \ref{thm:mmatrix} are now satisfied: from
  \eqref{eqn:strict_diag_dom} and \eqref{eqn:dd}, we conclude that $\hat{K}$ is
  irreducibly diagonaly dominant, and \eqref{eqn:entries} satisfies items 1. and
  2. of Lemma \ref{thm:mmatrix}. Hence $\hat{K}$ is an $M-$matrix.
  $P^T\hat{K}P$ being the symmetric permutation of rows and columns of $\hat{K}$
  above reasoning holds. Lastly, $R^T P^T \hat{K} P R$ being a principle
  submatrix of $P^T \hat{K} P$ is also an $M-$matrix, see proof in
  \cite{Horn1991}[p. 114].
\end{proof}
\begin{remark}
  From \eqref{eqn:short_not} and \eqref{eqn:trunc_mass_stiff}, the $(1,1)$ block
  $\hat{A} = \epsilon(\hat{K} + \hat{m}\hat{m}^T),$ where $\hat{K} = TKT +
  \hat{T}, \hat{m} = Tm.$ To solve with $\hat{A},$ we use the Sherman-Woodbury
  formula
\begin{align}
  \hat{A}^+ = (\hat{K} + \tilde{m}\tilde{m}^T)^{+} = \hat{K}^{+} -
  \frac{\hat{K}^{+}\tilde{m}\tilde{m}^T \hat{K}^{+}}{1 +
    \tilde{m}^T\hat{K}^{+} \tilde{m}}.
\end{align}
Here $\hat{K}^+$ denotes pseudo-inverse of $\hat{K},$ however, in our case
$\hat{K}$ is a non-singular $M-$matrix, see Definition \ref{def:mmatrix1}. Since
$\hat{K}^+$ is an $M-$matrix algebraic multigrid or incomplete Cholesky (which
is as
stable as exact Cholesky factorization, \cite{Meijerink1977}[Theorem 3.2] ) may
be used as a preconditioner to solve with $\hat{K}$ inexactly.
\end{remark}   
We provide a slightly different proof then in \cite{Graeser2009}. 
%Our proof is based on the fact that $M$ has positive entries.
\begin{theorem}
  \label{schur_spd}
  The negative Schur complement $S = C + \hat{B}\hat{A}^{-1}\hat{B}^T$ is
  non-singular, in particular, SPD if and only if $|\mathcal{N}_h^{\bullet}|>0.$
\end{theorem}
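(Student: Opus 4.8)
The plan is to prove both directions at once by computing the kernel of $S$, after first noting that $S$ is symmetric positive \emph{semi}definite unconditionally. By \eqref{eqn:short_not}, $C=\tau K$ is SPSD, and by the corollary to Lemma~\ref{thm:eig_bound} (together with Lemma~\ref{spd}) the truncated block $\hat A$ is SPD, so $\hat A^{-1}$ exists and is SPD; hence $\hat B\hat A^{-1}\hat B^T$ is SPSD and $S=C+\hat B\hat A^{-1}\hat B^T\succeq 0$ always. Consequently $S$ is SPD if and only if $\ker S=\{0\}$. Since $S$ is a sum of two SPSD matrices, $x^TSx=0$ forces simultaneously $x^TCx=0$ and $(\hat B^Tx)^T\hat A^{-1}(\hat B^Tx)=0$; the first gives $x\in\ker C$, and the second, because $\hat A^{-1}$ is SPD, gives $\hat B^Tx=0$. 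Therefore $\ker S=\ker C\cap\ker\hat B^T$, and it remains to describe these two kernels.

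Next I would identify $\ker C$. On the uniform — hence connected — triangulation, $K=((\nabla\lambda_p,\nabla\lambda_q))_{p,q}$ is the Neumann stiffness matrix, and from the partition of unity $\sum_q\lambda_q\equiv 1$ we get $(K\mathbf 1)_p=(\nabla\lambda_p,\nabla\!\sum_q\lambda_q)=0$, i.e. $K\mathbf 1=0$; connectedness then forces $\ker C=\ker K=\mbox{span}\{\mathbf 1\}$, a one-dimensional space. Hence $S$ is SPD if and only if $\mathbf 1\notin\ker\hat B^T$, i.e. if and only if $\hat B^T\mathbf 1\neq 0$.

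Finally I would compute $\hat B^T\mathbf 1$ explicitly and conclude. From \eqref{eqn:short_not} and \eqref{eqn:trunc_mass_stiff}, $\hat B=BT=MT$, so $\hat B^T=TM$ and $\hat B^T\mathbf 1=TM\mathbf 1=Tm$, where $m=(\langle\lambda_p,1\rangle)_p=M\mathbf 1$ has strictly positive entries (each nodal basis function satisfies $\lambda_p\ge 0$ with $\int_\Omega\lambda_p>0$). Thus $(\hat B^T\mathbf 1)_i=T_{ii}\,m_i$ equals $m_i>0$ at every non-truncated index and $0$ at every truncated index, so $\hat B^T\mathbf 1=0$ exactly when $T=0$, i.e. when every node is truncated. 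For the two directions: if at least one node is not truncated, then $\ker S\subseteq\mbox{span}\{\mathbf 1\}$ while $\mathbf 1^TS\mathbf 1=(\hat B^T\mathbf 1)^T\hat A^{-1}(\hat B^T\mathbf 1)>0$, so $\ker S=\{0\}$ and $S$ is SPD; conversely, if every node is truncated then $\hat B=MT=0$ and $S=C=\tau K$, which is singular. This is the asserted dichotomy: the negative Schur complement is non-singular — indeed SPD — precisely when the truncation does not annihilate every degree of freedom (equivalently, exactly in the case covered by the hypothesis on $\mathcal{N}_h^{\bullet}$).

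There is no deep obstacle here; the argument is a kernel computation, and the only places that need care are (i) the reduction $\ker S=\ker C\cap\ker\hat B^T$, which uses that both summands of $S$ are SPSD and that $\hat A$ is SPD (the corollary to Lemma~\ref{thm:eig_bound}), and (ii) the identification $\ker C=\mbox{span}\{\mathbf 1\}$, which uses connectedness of the uniform mesh and the partition-of-unity property $\sum_q\lambda_q\equiv 1$. The strict positivity of the entries of $m=M\mathbf 1$ is an elementary property of the P1 nodal basis and requires no further work.
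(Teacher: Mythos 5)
Your proof is correct and follows essentially the same route as the paper: both arguments reduce the question to whether $\mathbf{1}$, the only kernel vector of $C=\tau K$, is annihilated by $\hat{B}^T$, using positivity of the mass-matrix entries to get $\hat{B}^T\mathbf{1}=Tm\neq 0$ when some node is untruncated, the SPD property of $\hat{A}$ to make the second term positive there, and the degenerate case $\hat{B}=0$ (all nodes truncated) giving $S=C$ singular. Your write-up merely makes the kernel reduction $\ker S=\ker C\cap\ker\hat{B}^T$ explicit and, like the paper's own proof, reads the hypothesis as ``at least one untruncated node,'' which is the intended meaning even though the paper's earlier definition of $\mathcal{N}^{\bullet}_h$ literally names the truncated set.
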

\begin{proof}
  If $|\mathcal{N}_h^{\bullet}|=0,$ then $\hat{B}$ is zero matrix, consequently
  $S = C$ is singular.  For other implication, we recall that $\hat{B}^T =
  \hat{M}^T = -TM$ where $T$ is defined in \eqref{eqn:TandThat}. The
  $(i,j)^{th}$ entry of element mass matrix is given as follows
  \begin{align}
    M^K_{ij} = \int_K \phi_i \phi_j dx = \frac{1}{12} (1 + \delta_{ij}
    |K|) \label{eqn:element_mass}
    \quad i,j=1,2,3,
  \end{align}
  where $\delta_{ij}$ is the Kronecker symbol, that is, 1 if $i=j,$ and 0 if $i
  \neq j.$ Here $\phi_1, \phi_2,$ and $\phi_3$ are hat functions on triangular
  element $K$ with local numbering and $|K|$ is the area of triangle element
  $K.$ From \eqref{eqn:element_mass}, it is easy to see that
  \begin{align} \label{eqn:element_mass}
    M^K = \frac{1}{12} \begin{pmatrix}
        2 & 1 & 1 \\
        1 & 2 & 1 \\
        1 & 1 & 2 
        \end{pmatrix}.
  \end{align}
  Evidently, entries of global mass matrix $M = \sum_K M^K$ are also all
  positive, hence all entries of truncated mass matrix $\hat{M}$ remain
  non-negative. In particular, due to our hypothesis $|\mathcal{N}^{\bullet}|>0,$
  there are at least one untruncated column, hence, at least few positive
  entries. Consequently, $\mathbf{1}$ is neither in kernel of $M$ nor in the
  kernel of $\hat{M},$ in particular, $\mathbf{1}^T \hat{M}^T \mathbf{1} > 0.$
  The proof of the theorem then follows since $C$ is SPD except on $\mathbf{1}$
  for which $\hat{B}^T \mathbf{1}$ is non-zero, and the fact that $\hat{A}$ is
  SPD yields
  \begin{align}
    \left \langle \hat{B}\hat{A}^{-1}\hat{B}^T \mathbf{1}, \mathbf{1} \right
    \rangle = \left \langle \hat{A}^{-1}(\hat{B}^T \mathbf{1}),
      (\hat{B}^T\mathbf{1}) \right \rangle = \left \langle
      \hat{A}^{-1}(-\hat{M}^T \mathbf{1}), (-\hat{M}^T\mathbf{1}) \right \rangle
    > 0.
  \end{align}
\end{proof}
\subsubsection{\label{sec:precII} Preconditioner I:} 
\begin{align}  
  \label{eqn:assume}
  B^T = B, \quad C = \eta A, \, \eta \neq 0.
\end{align}
Moreover $A$ hence $C$ are non-singular.  Specifically for $\theta=1/2$, and
using \eqref{eqn:assume} and Lemma \ref{I_V_and_I_Q} above \vspace{-0.2cm} yields
\begin{align*}
  \mathcal{I}_V = A + \eta^{-1/2}[A,
  BA^{-1}B]_{1/2}, \quad \mathcal{I}_Q = C +
  \eta^{1/2}[A, BA^{-1}B]_{1/2}.
\end{align*}
But $[A, BA^{-1}B]_{1/2} = B,$ thus further
simplification yields
\begin{align}
  \mathcal{I}_V = A + \eta^{-1/2}B, \quad \mathcal{I}_Q = C +
  \eta^{1/2}B. \label{eqn:pre2}
\end{align} 
Choice of $\theta=0,1$ brings back Schur Complements. For large problems, it
won't be feasible to solve with $\mathcal{I}_V$ and $\mathcal{I}_Q$ in
\eqref{eqn:pre2} exactly, or not even up to double precision using prohibitively
expensive direct methods such as QR or LU factorizations \cite{Golub1996}. 
\begin{remark}
For
existence and subsequent application of fast inexact solvers for $\mathcal{I}_V$ and
$\mathcal{I}_Q,$ an important property to look for
is $M-$matrix property, but unfortunately this property is lost in
\eqref{eqn:pre2}, consequently, the diagonal dominance of $\mathcal{I}_V$ or
$\mathcal{I}_Q$ may be lost for certain values of $\eta.$ To sketch the proof
for $I_Q$, we observe that
\begin{align}
  A^K_{ij} & = \left( \int_K \nabla \phi_i \cdot \nabla \phi_j dx
    + \int_K \phi_i dx
    \int_K \phi_j dx \right), \quad i,j=1,2,3, \\
                 & = (b_ib_j + c_ic_j) \int_K dx + mm^T = (b_ib_j + c_ic_j) |K| + mm^T,
  \quad i,j=1,2,3.
\end{align}
Reusing the definition of element mass matrix in \eqref{eqn:element_mass}, we
have
\begin{align}
  \eta A^K_{ij} + \eta^{1/2}B^K_{ij} = \eta A^K_{ij} +
  \eta^{1/2}M^K_{ij} = \eta (b_ib_j + c_ic_j)|K| +
  \eta^{1/2}\frac{1}{12}(1 + \delta_{ij}|K|) + mm^T.
\end{align}
It is not hard to see that for certain values of $\eta,$ diagonal dominance
property \eqref{eqn:diag_dom} is lost. Similarly, diagonal dominance property is
lost for $\mathcal{I}_Q.$ To retain the M-matrix property, it is advisable to lump 
the mass matrix. We proved earlier that the truncated matrix $\hat{K}$ is M-matrix if 
there is at least one truncated node, in that case $\hat{\mathcal{I}_Q}$ can be made 
M-matrix.
\end{remark}

% \begin{remark}
%   Since $B = -M,$ we choose
%   \begin{align}
%     \mathcal{I}_V = A - \eta^{-1/2}B^T, \quad \mathcal{I}_Q = C -
%     \eta^{1/2}B. \label{eqn:pre22}
%   \end{align}  
%   Note that the norms $\mathcal{I}_V$ and $\mathcal{I}_Q$ in \eqref{eqn:pre2}
%   are uneffected by the choice of sign of $B$ thus our choice above
%   \eqref{eqn:pre22} is justified.  Since $A$ is SPD (Lemma \ref{spd}) and $-B=M$
%   is positive definite, $\mathcal{I}_V$ is positive definite. By similar
%   argument $\mathcal{I}_Q$ is also negative definite. Aggregation based
%   algebraic multigrid \cite{kumar2014,Notay2010} may be used to solve with
%   $\mathcal{I}_V$ and $\mathcal{I}_Q$ (because $-\mathcal{I}_Q$ is SPD).
% \end{remark}

The following remark relates the eigenvalues of $\mathcal{A}$ to the eigenvalue 
of $A$ by a change of variable
\begin{remark}
 To this end we rewrite the system
as follows
\begin{align}
  \begin{pmatrix}
    (i\eta^{1/2}) \hat{A} & \hat{B}^T \\
    \hat{B}               & (i\eta^{1/2})\hat{A}
  \end{pmatrix} =   \begin{pmatrix}
    \tilde{A}             & \hat{B}   \\
    \hat{B}               & \tilde{A}
  \end{pmatrix}. \label{eqn:cmplx_mat}
\end{align}
Let $(\lambda, u)$ be an eigenpair of $\tilde{A}+\hat{B}$, then $(u^T, u^T)^T$
is an eigenvector of \eqref{eqn:cmplx_mat}. Similarly, let $(\mu, v)$ be an
eigenpair of $\tilde{A}-\hat{B}$, then $(v^T, -v^T)^T$ is also an eigenvector of
\eqref{eqn:cmplx_mat}. This implies that eigenvalues of \eqref{eqn:cmplx_mat}
are union of eigenvalues of $\tilde{A}+\hat{B}$ and $\tilde{A}-\hat{B}.$ 
We notice that the eigenvalues come in pairs with real part of
eigenvalues in each pair having opposite sign.  
\end{remark}
The following theorem estimates the spectral radius of the preconditioned
operator.
\begin{lemma}
  \label{spec_radius}
  The spectral radius of the preconditioned operator is given as follows
  \begin{align}
    \rho(\left[  
      \begin{array}{cc}
        \bar{K} + \eta^{-1/2}M & 0 \\
        0 & \eta \bar{K} + \eta^{1/2}M
      \end{array}
    \right]^{-1} \left[ 
      \begin{array}{cc}
        \bar{K} & M \\
        M & -\eta \bar{K}  
      \end{array}
    \right]) \leq \frac{1}{(1 + \eta C)} + \frac{Ch^4}{(1 + \eta)} 
  \end{align}
\end{lemma}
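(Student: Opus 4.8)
The plan is to strip the block structure down to a one-parameter family of scalar $2\times 2$ generalized eigenvalue problems, solve those in closed form, and then control the outcome with the spectral bounds of $\bar K$ relative to $M$. To begin, note that the preconditioner $\mathcal{B}=\operatorname{diag}\!\big(\bar K+\eta^{-1/2}M,\ \eta\bar K+\eta^{1/2}M\big)$ is SPD: $\bar K=K+mm^{T}$ and $M$ are SPD by Lemma~\ref{spd}, so $\mathcal{I}_V=\bar K+\eta^{-1/2}M$ is SPD and the $(2,2)$ block is exactly $\eta(\bar K+\eta^{-1/2}M)=\eta\,\mathcal{I}_V$. Hence $\mathcal{B}^{-1}\mathcal{A}$ is $\mathcal{B}$-selfadjoint, its spectrum is real, and $\rho$ is the largest $|\lambda|$ over the pencil $\mathcal{A}z=\lambda\mathcal{B}z$. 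Since $\bar K$ and $M$ are both SPD (a fortiori on the uniform grid assumed here), they are simultaneously diagonalizable by congruence: choose $W$ with $W^{T}MW=I$ and $W^{T}\bar K W=\operatorname{diag}(\mu_1,\dots,\mu_n)$, $\mu_j>0$. Applying the congruence $\operatorname{diag}(W,W)$ to both $\mathcal{A}$ and $\mathcal{B}$ leaves the generalized spectrum unchanged and, after reordering the unknowns as $(x_1,y_1,x_2,y_2,\dots)$, splits the pencil into the $n$ scalar pencils $\bigl(\begin{smallmatrix}\mu_j&1\\1&-\eta\mu_j\end{smallmatrix}\bigr)v=\lambda\bigl(\begin{smallmatrix}\mu_j+\eta^{-1/2}&0\\0&\eta(\mu_j+\eta^{-1/2})\end{smallmatrix}\bigr)v$.

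Each of these $2\times 2$ problems is solved directly. Using $\eta\mu_j+\eta^{1/2}=\eta(\mu_j+\eta^{-1/2})$, the preconditioned $2\times 2$ matrix has zero trace, so its eigenvalues are $\pm\lambda_j$ with
\begin{align*}
\lambda_j^{2}=\frac{\eta\mu_j^{2}+1}{\eta(\mu_j+\eta^{-1/2})^{2}}=\frac{\eta\mu_j^{2}+1}{\eta\mu_j^{2}+2\eta^{1/2}\mu_j+1}=1-\frac{2\eta^{1/2}\mu_j}{(1+\eta^{1/2}\mu_j)^{2}}.
\end{align*}
Therefore $\rho(\mathcal{B}^{-1}\mathcal{A})^{2}=\max_j\lambda_j^{2}=1-\displaystyle\min_j\frac{2t_j}{(1+t_j)^{2}}$ with $t_j=\eta^{1/2}\mu_j$, and the whole statement reduces to lower-bounding $\min_j 2t_j/(1+t_j)^{2}$.

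For this I would invoke two bounds on the pencil $(\bar K,M)$. Coercivity of $\hat a(\cdot,\cdot)$ (Theorem~\ref{thm:properties}(\ref{item:1}), i.e.\ Poincar\'e together with the mass-conservation term $mm^{T}$ that removes $\mathbf 1$ from the kernel) gives $\mu_j=v^{T}\bar K v/v^{T}Mv\ge\underline\gamma>0$ with $\underline\gamma$ independent of $h,\tau,\epsilon$; boundedness of $\hat a(\cdot,\cdot)$ (Theorem~\ref{thm:properties}(\ref{item:3}), i.e.\ the inverse inequality on the uniform mesh) gives $\mu_j\le\overline\gamma\,h^{-2}$. Since $t\mapsto 2t/(1+t)^{2}$ is increasing on $[0,1]$ and decreasing afterwards, its minimum over $\{t_j\}\subset[\eta^{1/2}\underline\gamma,\ \eta^{1/2}\overline\gamma h^{-2}]$ is attained at one of the two endpoints. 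Evaluating at $t=\eta^{1/2}\underline\gamma$ and at $t=\eta^{1/2}\overline\gamma h^{-2}$, substituting back into the formula for $\rho^{2}$, and applying $\sqrt{1-s}\le 1-\tfrac{s}{2}$ produces an upper bound for $\rho$ of the form $(1+C\eta)^{-1}+Ch^{4}/(1+\eta)$, with the low-frequency endpoint producing the first term and the high-frequency endpoint (where $t_j\sim h^{-2}$) contributing the $h^{4}$ correction.

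The $2\times 2$ determinant computation is routine; the real work is the last estimate. The main obstacle is to make the two endpoint evaluations collapse into precisely the advertised expression: one has to track the Poincar\'e constant $\underline\gamma$ and the inverse-inequality constant $\overline\gamma$ explicitly, carry out the case distinction according to whether $1$ lies in $[\eta^{1/2}\underline\gamma,\ \eta^{1/2}\overline\gamma h^{-2}]$ (that is, whether the extremal mode is genuinely a low or a high frequency), and check that in the regime of interest $\eta=\epsilon\tau\ll 1$ the constants can be chosen so the bound holds uniformly in all parameters on a fine enough mesh --- with the understanding, already signalled by the reuse of the symbol $C$, that the constant need not be the same in the two terms.
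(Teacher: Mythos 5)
Your reduction is valid and genuinely different from the paper's argument: the paper does not diagonalize anything, but takes the Rayleigh quotient of the pencil, writes $|\lambda|$ as a ratio of quadratic forms, and then estimates the coupling term $2|(Mu,v)|$ by Cauchy--Schwarz, finite-dimensional norm equivalence and Young's inequality. Your observation that the $(2,2)$ block of the preconditioner is exactly $\eta(\bar K+\eta^{-1/2}M)$, combined with the congruence $W^{T}MW=I$, $W^{T}\bar KW=\mathrm{diag}(\mu_{j})$, correctly decouples the problem into $n$ scalar $2\times2$ pencils and yields the exact spectrum $\pm\lambda_{j}$ with $\lambda_{j}^{2}=(1+t_{j}^{2})/(1+t_{j})^{2}$, $t_{j}=\eta^{1/2}\mu_{j}$; up to that point the algebra is right and is in fact sharper than what the paper's estimate delivers.

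The genuine gap is the step you defer. From your own formula, $\rho^{2}=\max_{j}(1+t_{j}^{2})/(1+t_{j})^{2}$, so $\rho\ge1/\sqrt{2}$ always, and since $\mu_{\max}\sim h^{-2}$ by the inverse inequality, the high-frequency endpoint has $t_{\max}\sim\eta^{1/2}h^{-2}\to\infty$ as $h\to0$ for fixed $\eta$, whence $\rho\ge1-\mathcal{O}(h^{2}\eta^{-1/2})\to1$. The advertised right-hand side tends to $1/(1+C\eta)<1$ as $h\to0$, so no choice of $h$- and $\eta$-independent constants can make your two endpoint evaluations ``collapse into precisely the advertised expression''; moreover your closing appeal to a ``fine enough mesh'' points the wrong way, because refining the mesh pushes $t_{\max}$ up and $\rho$ closer to $1$. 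To finish along your lines you would have to either impose a mesh/parameter condition guaranteeing that the extremal mode is not a high frequency (essentially $\eta^{1/2}\mu_{\max}\lesssim1$, i.e. $\eta^{1/2}\lesssim h^{2}$), or let the constant $C$ absorb mesh-dependent quantities --- which is also what happens silently in the paper's proof, whose ``equivalence of norms in finite dimension'' constants depend on $h$. As written, the stated inequality does not follow from, and for fixed $\eta$ and small $h$ is contradicted by, the exact spectrum you computed, so the last paragraph of your proposal cannot be carried out as described.
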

\begin{proof}
  Consider the generalized eigenvalue problem
  \begin{align}
    \left[\begin{array}{cc}
      \bar{K} & M \\
      M & -\eta \bar{K}
    \end{array}\right]
  \left[\begin{array}{c}
    v \\
    u
  \end{array}\right] = \lambda
\left[\begin{array}{cc}
  \bar{K} + \eta^{-1/2}M & 0 \\
  0 & \eta \bar{K} + \eta^{1/2}M
\end{array}\right]  \left[\begin{array}{c}
    v \\
    u
  \end{array}\right]
  \end{align}
Taking inner product on both sides with $[v^T, u^T],$ we have
  \begin{align}
   [v^T, u^T] \left[\begin{array}{cc}
      \bar{K} & M \\
      M & -\eta \bar{K}
    \end{array}\right]
  \left[\begin{array}{c}
    v \\
    u
  \end{array}\right] &= \lambda [v^T, u^T]
\left[\begin{array}{cc}
  \bar{K} + \eta^{-1/2}M & 0 \\
  0 & \eta \bar{K} + \eta^{1/2}M
\end{array}\right]  \left[\begin{array}{c}
    v \\
    u
  \end{array}\right] \\
   [v^T, u^T] \left[\begin{array}{c}
      \bar{K}v + Mu \\
      Mv -\eta \bar{K}u
    \end{array}\right] &= \lambda [v^T, u^T]
\left[\begin{array}{c}
  \bar{K}v + \eta^{-1/2}Mv   \\
   \eta \bar{K}u + \eta^{1/2}Mu
\end{array}\right] \\
v^T\bar{K}v + v^TMu + u^TMv - \eta u^T \bar{K} u &= \lambda [v^T \bar{K}v + \eta^{-1/2}v^TMv + \eta
u^T \bar{K} u + \eta^{1/2}u^T M u] \\
v^T \bar{K} v + 2Re(Mu,v) - \eta u^T \bar{K} u &= \lambda [v^T \bar{K} v + \eta^{-1/2}v^TMv + \eta
u^T \bar{K} u + \eta^{1/2}u^T M u] 
  \end{align}
Taking absolute value, we get
\begin{align}
\left| v^T \bar{K} v + 2Re(Mu,v) + \eta u^T \bar{K} u \right| &= \left|\lambda \left[ v^T \bar{K} v + \eta^{-1/2}v^TMv + \eta
u^T \bar{K} u + \eta^{1/2}u^T M u \right] \right| \\
\|v\|^2_{\bar{K}} + \eta \|u\|^2_{\bar{K}} + 2 \left| Re(Mu,v) \right|  &= |\lambda| \left[ \|v\|^2_{\bar{K}} + \eta^{-1/2} \|
v \|_M^2 + \eta \|u\|_{\bar{K}}^2 + \eta^{1/2}\|u\|_M^2 \right]  \\
|\lambda| &= \frac{\|v\|^2_{\bar{K}} + \eta \|u\|^2_{\bar{K}} + 2 \left| Re(Mu,v) \right| }{\left[ \|v\|^2_{\bar{K}} + \eta^{-1/2} \|
v \|_M^2 + \eta \|u\|_{\bar{K}}^2 + \eta^{1/2}\|u\|_M^2 \right]}
\end{align}
We note that $M$ being mass matrix $\|M\| \leq Ch^2$ for any finite dimensional norm.
We now estimate $| Re (Mu, v) |$ 
\begin{align}
  |(Mu, v)| &\leq C_1 h^2 |v|_1 |u|_1 \quad (\mbox{Cauchy-Schwarz ineq.})\\
  &\leq C_1 h^2 \|v\|_{\bar{K}} \|u\|_M \quad (\mbox{Equivalence of norms in finite dimension})\\
  &\leq C_1 C_2 C_3 h^2 \left( \frac{\|v\|^2_{\bar{K}} + \|u\|^2_M}{2} \right) \quad
  (\mbox{Young's inequality})
\end{align}
Thus we have
\begin{align}
  2|(Mu,v)| &\leq 2 \frac{Ch^4}{2}(\|v\|^2_{\bar{K}} + \|u\|^2_M) \quad (\mbox{set}~C = C_1C_2C_3) \\
  &= Ch^2 \|v\|^2_{\bar{K}} + \|u\|^2_M 
\end{align}
With this, the estimate for $|\lambda|$ becomes
\begin{align}
  |\lambda| &\leq \frac{\|v\|^2_{\bar{K}} + \eta \|u\|^2_{\bar{K}} + Ch^4
    \left( \|v\|^2_{\bar{K}} + \|u\|^2_M \right) }{ \|v\|^2_{\bar{K}} + \eta^{-1/2}
      \|v\|_M^2 + \eta \|u\|_{\bar{K}}^2 + \eta^{1/2}\|u\|_M^2 } \\
  &\leq \frac{ \|v\|^2_{\bar{K}} + \eta \|u\|^2_{\bar{K}} }{ \|v\|^2_{\bar{K}} +
      \eta^{-1/2} \|v\|_M^2 + \eta \|u\|_{\bar{K}}^2 + \eta^{1/2}\|u\|_M^2} +
  \frac{Ch^4 \left( \|v\|^2_{\bar{K}} + \|u\|^2_M \right) }{ \|v\|^2_{\bar{K}} + \eta^{-1/2}
      \|v\|_M^2 + \eta \|u\|_{\bar{K}}^2 + \eta^{1/2}\|u\|_M^2 } \\
  &= \frac{1}{\frac{ \|v\|^2_{\bar{K}} +
      \eta^{-1/2} \|v\|_M^2 + \eta \|u\|_{\bar{K}}^2 + \eta^{1/2}\|u\|_M^2 } {\|v\|^2_{\bar{K}} + \eta \|u\|^2_{\bar{K}} }} +
  \frac{Ch^4 }{\frac{ \|v\|^2_{\bar{K}} + \eta^{-1/2}
      \|v\|_M^2 + \eta \|u\|_{\bar{K}}^2 + \eta^{1/2}\|u\|_M^2 }{ \|v\|^2_{\bar{K}} + \|u\|^2_M }} \\
  &\leq \frac{1}{1 + \eta \left(\frac{\|v\|_M^2 + \eta \|u\|_M^2}{\|v\|_{\bar{K}}^2 +
      \eta \|u\|_{\bar{K}}^2}\right)} + \frac{Ch^4 }{\frac{ \|v\|^2_{\bar{K}} + \eta^{-1/2}
      \|v\|_M^2 + \eta \|u\|_{\bar{K}}^2 + \eta^{1/2}\|u\|_M^2 }{
      \|v\|^2_{\bar{K}} + \|u\|^2_M }} \\
  &\leq \frac{1}{1 + \eta \left(\frac{\|v\|_M^2 + \eta \|u\|_M^2}{\|v\|_{\bar{K}}^2 +
      \eta \|u\|_{\bar{K}}^2}\right)} + \frac{Ch^4}{ 1 + \left(\frac{\eta \|u \|_{\bar{K}}^2
  + \eta^{1/2}\|u\|_M^2}{\|v\|_{\bar{K}}^2 + \|u\|_M^2} \right)} \quad
(\mbox{since}~\eta^{-1/2} > 1) \\
  &\leq \frac{1}{1 + \eta \left(\frac{\|v\|_M^2 + \eta \|u\|_M^2}{\|v\|_{\bar{K}}^2 +
      \eta \|u\|_{\bar{K}}^2}\right) } + \frac{Ch^4}{ 1 + \eta \left(\frac{\|u \|_{\bar{K}}^2
  + \|u\|_M^2}{\|v\|_{\bar{K}}^2 + \|u\|_M^2} \right)} \quad
(\mbox{since}~\eta = \mbox{min}(\eta, \eta^{1/2})) \\
\end{align}
Using equivalence of norms in finite dimension
\begin{align}
  \|u\|_M \leq C_4 \|u\|_{\bar{K}}, \quad \|v\|_M \leq C_5\|v\|_{\bar{K}}
\end{align}
we have
\begin{align}
  |\lambda|^2 &\leq \frac{1}{(1 + \eta C_6)} + \frac{Ch^4}{(1 + \eta)} 
\end{align}
\end{proof}
The following Lemma shows that condition number is of the order one.
\begin{theorem}
  \label{cond}
  The asymptotic condition number is given as follows
  \begin{align}
       \kappa \left(\left[  
      \begin{array}{cc}
        \bar{K} + \eta^{-1/2}M & 0 \\
        0 & \eta \bar{K} + \eta^{1/2}M
      \end{array}
    \right]^{-1} \left[ 
      \begin{array}{cc}
        \bar{K} & M \\
        M & -\eta \bar{K}  
      \end{array}
    \right] \right) = O(1).
   \end{align}
\end{theorem}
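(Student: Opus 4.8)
The plan is to read $\kappa$ as the ratio $|\lambda|_{\max}/|\lambda|_{\min}$ of the extreme moduli of the eigenvalues of the pencil $(\mathcal{A},\mathcal{B})$ with $\mathcal{A}=\bigl[\begin{smallmatrix}\bar{K} & M\\ M & -\eta\bar{K}\end{smallmatrix}\bigr]$ as in \eqref{eqn:simplified} and $\mathcal{B}=\mathrm{diag}\bigl(\bar{K}+\eta^{-1/2}M,\ \eta\bar{K}+\eta^{1/2}M\bigr)$. Since $\mathcal{B}$ is SPD the generalized eigenvalues are real with real eigenvectors, so this ratio is well defined and it suffices to bound $|\lambda|_{\max}$ from above and $|\lambda|_{\min}$ from below by quantities independent of $\epsilon,\tau,h$ (the latter at least asymptotically as $h\to 0$). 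The upper bound is already in hand: Lemma \ref{spec_radius} gives $|\lambda|_{\max}=\rho(\mathcal{B}^{-1}\mathcal{A})\le \frac{1}{1+\eta C_6}+\frac{Ch^4}{1+\eta}$, and since $\eta,C_6>0$ the first term is below $1$ and the second below $Ch^4$, so $|\lambda|_{\max}\le 1+Ch^4$, which is bounded uniformly in the parameters and in fact $\le 2$ on a sufficiently fine grid.

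What remains, and what I expect to be the crux, is a uniform lower bound $|\lambda|_{\min}\ge c_\ast-C'h^4$ with $c_\ast>0$ independent of $\epsilon,\tau,h$. I would mirror the computation in the proof of Lemma \ref{spec_radius}: testing $(\mathcal{A}-\lambda\mathcal{B})(v,u)=0$ with $(v,u)$ yields the exact Rayleigh identity
\[\lambda=\frac{\|v\|_{\bar{K}}^2+2(Mu,v)-\eta\|u\|_{\bar{K}}^2}{\|v\|_{\bar{K}}^2+\eta^{-1/2}\|v\|_M^2+\eta\|u\|_{\bar{K}}^2+\eta^{1/2}\|u\|_M^2}.\]
When the ``$v$-energy'' $\|v\|_{\bar{K}}^2$ dominates $\eta\|u\|_{\bar{K}}^2$ we have $\lambda>0$ and the numerator is at least $\|v\|_{\bar{K}}^2-\eta\|u\|_{\bar{K}}^2-2|(Mu,v)|$; absorbing the cross term with Young's inequality and the mass-matrix bound $\|M\|\le Ch^2$ exactly as in Lemma \ref{spec_radius} leaves a fixed positive fraction of the denominator, up to an $O(h^4)$ remainder, and the mirror-image argument handles the regime where the ``$u$-energy'' dominates. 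The delicate case is the transition $\|v\|_{\bar{K}}^2\approx \eta\|u\|_{\bar{K}}^2$, where the two energies nearly cancel and a pure energy bound is not enough; there I would invoke the discrete inf--sup (Babuska--Brezzi) stability \eqref{eqn:bb}, which holds thanks to Theorem \ref{schur_spd} and the one-sided control of the truncation in Lemma \ref{thm:eig_bound}, to keep the numerator bounded away from zero independently of $\eta=\tau/\epsilon$.

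A cleaner, essentially calculation-free route is to invoke Zulehner's theorem directly. By Lemma \ref{I_V_and_I_Q} with $\theta=1/2$ and $C=\eta A$, the weights $\mathcal{I}_V=A+\eta^{-1/2}B$ and $\mathcal{I}_Q=\eta A+\eta^{1/2}B$ of \eqref{eqn:pre2} are precisely Zulehner's isometries, because the identity $[A,BA^{-1}B]_{1/2}=B$ makes the construction exact; consequently the fixed-point relations $\mathcal{I}_V\sim A+B^\ast\mathcal{I}_Q^{-1}B$ and $\mathcal{I}_Q\sim C+B\mathcal{I}_V^{-1}B^\ast$, that is \eqref{eqn:infsup1} and \eqref{eqn:infsup2}, hold with constants $\underline{\gamma}_v,\overline{\gamma}_v,\underline{\gamma}_q,\overline{\gamma}_q$ that are absolute (independent of $\eta$), and the theorem of Zulehner then yields \eqref{eqn:infsupA} with universal $\underline{c}_x,\overline{c}_x$, i.e. $\underline{c}_x\le|\lambda|\le\overline{c}_x$ for every eigenvalue of $\mathcal{B}^{-1}\mathcal{A}$, whence $\kappa\le\overline{c}_x/\underline{c}_x=O(1)$. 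Mesh dependence enters only through the truncation, which by Lemma \ref{thm:eig_bound} does not degrade these constants, and through replacing the exact weights by their mass-lumped/multigrid surrogates, which on a fine enough grid contributes no more than the $O(h^4)$ perturbation already seen in Lemma \ref{spec_radius}; combining with the upper bound gives $\kappa\le (1+Ch^4)/(c_\ast-C'h^4)\to 1/c_\ast=O(1)$ uniformly in $\epsilon$ and $\tau$. The one genuinely non-routine step is the lower bound in the transition regime: the cross term $2(Mu,v)$, which in Lemma \ref{spec_radius} was merely an additive perturbation to be bounded above, can there drive the numerator toward zero, and excluding this uniformly in $\eta$ is exactly the inf--sup stability of the saddle point system rather than an energy manipulation.
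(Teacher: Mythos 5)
Your proposal actually supplies more than the paper does: in the source, Theorem \ref{cond} is stated with no proof at all, the only supporting result being the preceding Lemma \ref{spec_radius}, which controls just the largest eigenvalue modulus; the lower bound on $|\lambda|_{\min}$ that you identify as the crux is precisely what the paper leaves unaddressed. Of your two routes, the second one is sound and is the natural completion of the paper's own Zulehner setup: with $A=\bar K$, $B=M$, $C=\eta\bar K$ and $\theta=1/2$, the identity $[A,BA^{-1}B]_{1/2}=B$ makes $\mathcal{I}_V=\bar K+\eta^{-1/2}M$ and $\mathcal{I}_Q=\eta\bar K+\eta^{1/2}M$ satisfy \eqref{eqn:infsup1}--\eqref{eqn:infsup2} with absolute constants (diagonalizing the pair $(M,\bar K)$ and writing $t=\eta^{-1/2}\mu$ for a generalized eigenvalue $\mu$, both ratios reduce to $(1+t+t^2)/(1+t)^2\in[3/4,1]$), and Zulehner's theorem together with the symmetry of $\mathcal{A}$ and positive definiteness of the block-diagonal weight then gives $\underline c_x\le|\lambda|\le\overline c_x$ uniformly in $\epsilon,\tau,h$ --- in fact with no $O(h^4)$ correction and no asymptotics, which is stronger than the ``asymptotic'' claim of the theorem. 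Your first route, by contrast, has a genuine gap exactly where you flag it: since $\mathcal{A}$ is indefinite, the infimum of the absolute Rayleigh quotient over arbitrary vectors is zero, so the transition regime cannot be closed by energy estimates alone, and appealing to \eqref{eqn:bb} there is circular --- the paper never establishes that inf-sup constant to be independent of $\epsilon,\tau,h$ with respect to these particular norms (that independence is exactly what the Zulehner verification provides), and its claimed monotonicity of the inf-sup constant under truncation is itself dubious. Two minor further points: the appeal to Lemma \ref{thm:eig_bound} is moot because Theorem \ref{cond} involves the untruncated matrices, and the upper bound you import from Lemma \ref{spec_radius} becomes unnecessary once the Zulehner route is taken, which is just as well given that the $Ch^4$ term in that lemma's derivation is itself loosely justified.
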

For sake of comparison, we also consider block triangular preconditioners of the
form
% \begin{align}
%   \mathcal{B} = 
%   \begin{pmatrix}
%     X_{11} & \\
%     X_{21} & X_{22}
%   \end{pmatrix}
% \end{align}
used in Bosch et. al. \cite{Bosch2014}. In the following, we briefly describe
this preconditioner in our notation.
\subsubsection{\label{sec:precon} Preconditioner II}
In Bosch et. al. \cite{Bosch2014}, a preconditioner is proposed in the framework
of a semi-smooth Newton method combined with Moreau-Yosida regularization for
the same problem. However, the preconditioner was constructed for a linear
system which is different from the one we considered here \eqref{eqn:rls}.  The
preconditioner proposed in \cite{Bosch2014} has the following block lower
triangular form
\begin{align} \label{eqn:block_lower}
  \mathcal{B} =
  \begin{pmatrix}
    \bar{K}                      & 0                     \\
    M                      & -S
  \end{pmatrix},
\end{align}
where $S = C + M \bar{K}^{-1}M^T$ is the Schur complement. Note
that such preconditioners are also called inexact or preconditioned Uzawa
preconditioners for linear saddle point problems.  Both $\bar{K}$ and $S$ are
invertible \cite{Graeser2009b}. Hence by block $2 \times 2$ inversion formula we
have
\begin{align}
\mathcal{B}^{-1} = 
  \begin{pmatrix}
    \bar{K}                      & 0                     \\
    M                            & -S
  \end{pmatrix}^{-1} = 
  \begin{pmatrix}
    \bar{K}^{-1}                 & 0                     \\
    S^{-1}M^T \bar{K}^{-1} & -S^{-1}
  \end{pmatrix}.
\end{align} 
Let $\tilde{S}$ be any approximation of Schur
complement $S$ in $\mathcal{B}$ in \eqref{eqn:block_lower}, then the new
preconditioner $\hat{\mathcal{B}},$ and the corresponding preconditioned
operator $\hat{\mathcal{B}}^{-1}\mathcal{A}$ is given as follows
\begin{align}
  \hat{\mathcal{B}} = \begin{pmatrix}
    \bar{K}                      & 0                     \\
    M                            & -\tilde{S}
  \end{pmatrix}, \quad \hat{\mathcal{B}}^{-1}\mathcal{A}
  = \begin{pmatrix}
    I                            & \bar{K}^{-1}M^T \\
    0                            & \tilde{S}^{-1}S
  \end{pmatrix}. \label{eqn:BinvA}
\end{align}
In this paper we choose $\tilde{S}$ as follows
\begin{align}
 \tilde{S} = S_1 \bar{K}^{-1}S_2 = (M + \sqrt \eta \bar{K})\bar{K}^{-1}(M + \sqrt \eta \bar{K})
\end{align}

We note the following trivial result.
\begin{fact} \label{thm:eigleft} Let $\mathcal{B}$ be defined as in
  \eqref{eqn:BinvA}, then there are $|\mathcal{N}_h|$ eigenvalues of
  $\mathcal{B}^{-1}\mathcal{A}$ equal to one, and the rest are the eigenvalues
  of the preconditioned Schur complement $\tilde{S}^{-1}S.$
\end{fact}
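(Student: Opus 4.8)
The plan is to read off the spectrum directly from the explicit product formula already recorded in \eqref{eqn:BinvA}. First I would check that $\hat{\mathcal{B}}$ is invertible: its $(1,1)$ block $\bar{K}=K+mm^T$ is SPD by the argument of Lemma~\ref{spd}, and its $(2,2)$ block $\tilde{S}=(M+\sqrt{\eta}\,\bar{K})\bar{K}^{-1}(M+\sqrt{\eta}\,\bar{K})$ is SPD, since $M+\sqrt{\eta}\,\bar{K}$ is a sum of an SPD matrix and a positive multiple of an SPD matrix, hence SPD and invertible, and conjugating the SPD matrix $\bar{K}^{-1}$ by the invertible symmetric matrix $M+\sqrt{\eta}\,\bar{K}$ keeps it SPD. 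Thus $\hat{\mathcal{B}}^{-1}$ exists and the identity
\begin{align}
 \hat{\mathcal{B}}^{-1}\mathcal{A}
 = \begin{pmatrix} Id & \bar{K}^{-1}M^T \\ 0 & \tilde{S}^{-1}S \end{pmatrix}
\end{align}
of \eqref{eqn:BinvA} is valid, where $S = C + M\bar{K}^{-1}M^T$ is the exact negative Schur complement.

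Next I would invoke the standard fact that the characteristic polynomial of a block upper triangular matrix is the product of the characteristic polynomials of its diagonal blocks. Applied to the matrix above this gives
\begin{align}
 \det(\hat{\mathcal{B}}^{-1}\mathcal{A} - \lambda\, Id)
 = \det(Id_{|\mathcal{N}_h|} - \lambda\, Id)\,\det(\tilde{S}^{-1}S - \lambda\, Id)
 = (1 - \lambda)^{|\mathcal{N}_h|}\,\det(\tilde{S}^{-1}S - \lambda\, Id),
\end{align}
because the $(1,1)$ block of $\hat{\mathcal{B}}^{-1}\mathcal{A}$ acts on the $|\mathcal{N}_h|$-dimensional $u$-space. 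Hence $\lambda = 1$ is an eigenvalue of $\hat{\mathcal{B}}^{-1}\mathcal{A}$ of algebraic multiplicity at least $|\mathcal{N}_h|$, and the remaining eigenvalues are exactly those of the preconditioned Schur complement $\tilde{S}^{-1}S$, counted with multiplicity, which is the assertion. If one wants the eigenvectors as well, $(v^T, 0)^T$ is an eigenvector for $\lambda = 1$ for every $v$, and each eigenpair $(\mu, w)$ of $\tilde{S}^{-1}S$ with $\mu \neq 1$ lifts to the eigenpair $\big(\mu,\ (w_1^T, w^T)^T\big)$ with $w_1 = (\mu-1)^{-1}\bar{K}^{-1}M^T w$, as one sees by solving the two block rows of the eigenvalue equation.

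There is essentially no hard step here: the statement is an immediate consequence of the already-derived factorization \eqref{eqn:BinvA}, and the only points needing a line of justification are the invertibility of $\bar{K}$ and of $\tilde{S}$ (so that $\hat{\mathcal{B}}^{-1}$ and the block triangular form make sense) and the identification of the size of the top-left identity block with $|\mathcal{N}_h|$. One caveat worth spelling out in the text is that the phrase ``the rest are the eigenvalues of $\tilde{S}^{-1}S$'' is to be read with multiplicity and does not exclude $\tilde{S}^{-1}S$ from itself having $1$ as an eigenvalue, in which case the multiplicity of $\lambda=1$ for $\hat{\mathcal{B}}^{-1}\mathcal{A}$ simply exceeds $|\mathcal{N}_h|$.
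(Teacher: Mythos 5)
Your proof is correct and follows essentially the same route the paper intends: the paper states this as a trivial consequence of the block upper triangular form in \eqref{eqn:BinvA} (offering no written proof), and your argument simply makes that explicit by factoring the characteristic polynomial into $(1-\lambda)^{|\mathcal{N}_h|}\det(\tilde{S}^{-1}S-\lambda\,Id)$, with the invertibility of $\bar{K}$ and $\tilde{S}$ checked along the way. The added remarks on eigenvectors and on counting multiplicity when $\tilde{S}^{-1}S$ itself has eigenvalue $1$ are accurate and harmless refinements.
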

\begin{remark}
  When using GMRES \cite{Saad2003}, right preconditioning is preferred.  Similar
  result as for the left preconditioner above Theorem \ref{thm:eigleft} holds.
\end{remark}
Let $x=[x_1, x_2], b = [b_1, b_2].$ The preconditioned system
$\mathcal{B}^{-1}\mathcal{A}x = \mathcal{B}^{-1}b$ is given as follows
\begin{align}
\begin{pmatrix}
    I                            & \hat{A}^{-1}\hat{B}^T \\
    0                            & \hat{S}^{-1}S
  \end{pmatrix}
  \begin{pmatrix}
    x_1                                                  \\ x_2
  \end{pmatrix} =
    \begin{pmatrix}
    \hat{A}^{-1}                 & 0                     \\
    S^{-1}\hat{B}^T \hat{A}^{-1} & -S^{-1}
  \end{pmatrix}
  \begin{pmatrix}
    b_1                                                  \\ b_2
  \end{pmatrix}
\end{align}
from which we obtain the following set of equations
\begin{align}
  x_1 + \hat{A}^{-1}\hat{B}^T x_2 = \hat{A}^{-1}b_1, \quad \hat{S}^{-1}Sx_2 =
  S^{-1}(\hat{B}^T \hat{A}^{-1} b_1 - b_2).
\end{align}
\begin{alg}\label{alg:solve}
Objective: Solve $\mathcal{B}^{-1}\mathcal{A}x = \mathcal{B}^{-1}b$
  \begin{enumerate}
  \item Solve for $x_2:$ $\hat{S}^{-1}Sx_2 =
    \hat{S}^{-1}(\hat{B}^T\hat{A}^{-1}b_1 - b_2)$
  \item Set $x_1 = \hat{A}^{-1}(b_1 - \hat{B}^Tx_2)$
  \end{enumerate}
\end{alg}
Here if Krylov subspace method is used to solve for $x_1$, then matrix vector
product with $S$ and a solve with $\hat{S}$ is needed. However, when the problem
size, i.e., $|\mathcal{N}_h|$ is large, it won't be feasible to do exact solve
with $\hat{A},$ and we need to solve it inexactly, for example, using algebraic
multigrid methods. In the later case, the decoupling of $x_1$ and $x_2$ as in
Algorithm \ref{alg:solve} is not possible; then the preconditioned Schur complement $\tilde{S}^{-1}S$ is not
symmetric, so we use GMRES in Saad \cite[p. 269]{Saad2003} that allows
nonsymmetric preconditioners.

\section{\label{sec:numexp} Numerical Experiments}
All the experiments were performed in double precision arithmetic in MATLAB. The
Krylov solver used was GMRES with subspace dimension of 200, and maximum number
of iterations allowed was 300. The iteration was stopped as soon as the relative
residual was below the tolerance of $10^{-7}.$

\subsection{\label{sec:spec} Spectrum Analysis}
 
We consider two samples of active set configurations that occur when a square
region evolves as shown in figures \ref{fig:square_pink} and \ref{fig:circle_pink}. The
region between the two squares and the circles is the interface between two bulk
phases taking values +1 and -1; initially we chose random values between -0.3
and 0.5 in the interface region. The width of the interface is kept to be 10
times the chosen mesh size. The time step $\tau$ is chosen to be equal to
$\epsilon.$ We compare various mesh sizes leading to number grid points up to
just above 1 million, and compare various values of epsilon for each mesh
sizes. We observe that the number of iterations remain independent of the mesh
size, however it depends on $\epsilon.$ But we observe that for a fixed epsilon,
with finer mesh, the number of iterations actually decrease significantly. For
example the number of iterations for $h=2^{-7}, \epsilon=10^{-6}$ is 84 but the
number of iterations for $h=2^{-10}, \epsilon=10^{-6}$ is 38, a reduction of 46
iterations!  It seems that finer mesh size makes the preconditioner more
efficient. We also observe that the time to solve is proportional to number of
iterations; the inexact solve for the (1,1) block remains optimal because the
(1,1) block is essentially Laplacian for which AMG remains very efficient.
\begin{figure}[tbp]
  \centering  
  \subfigure{\label{fig:square_pink}\includegraphics[width=7.3cm, height=7.3cm]{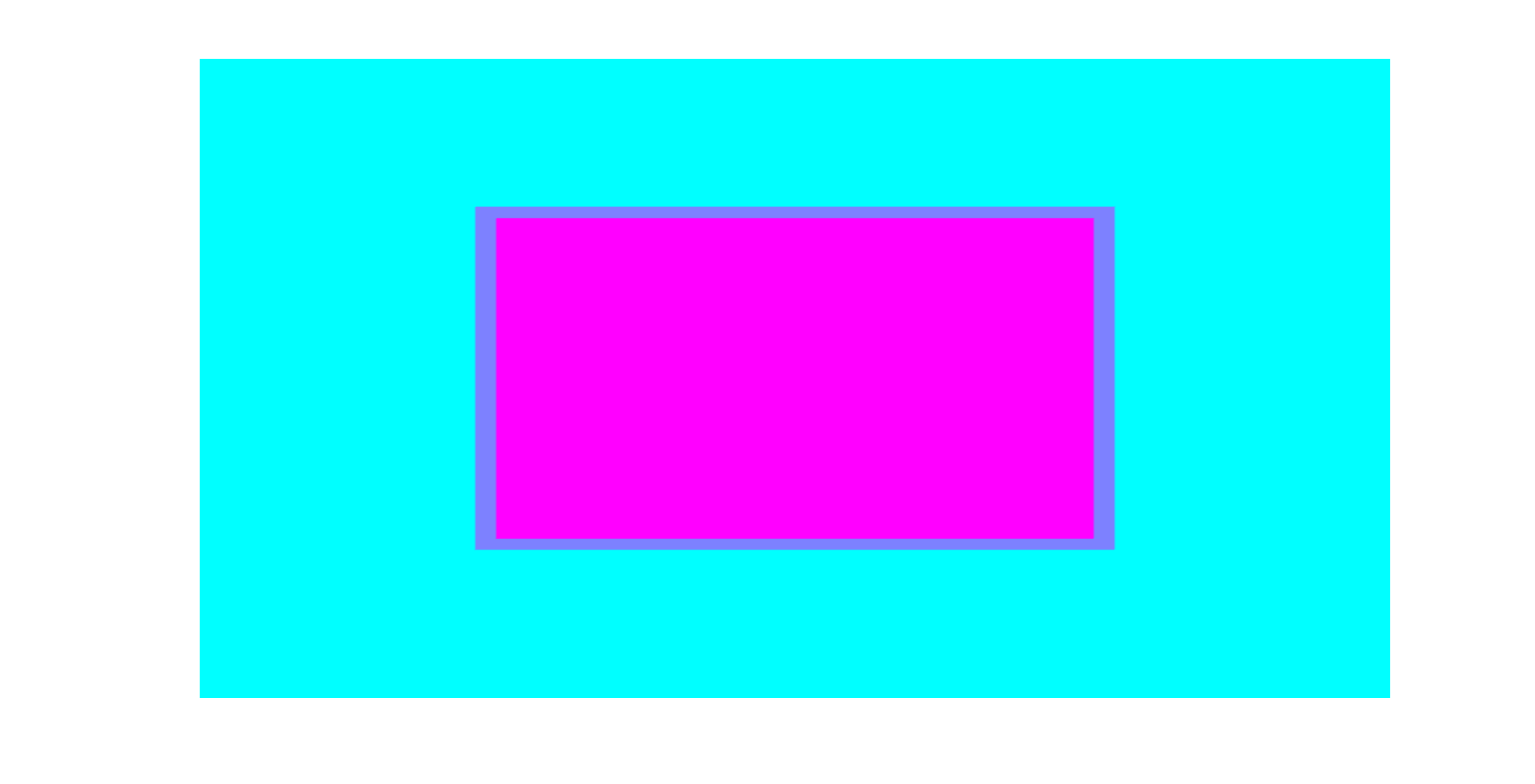}}
  \hfill  
  \subfigure{\label{fig:circle_pink}\includegraphics[width=7.3cm, height=7.3cm]{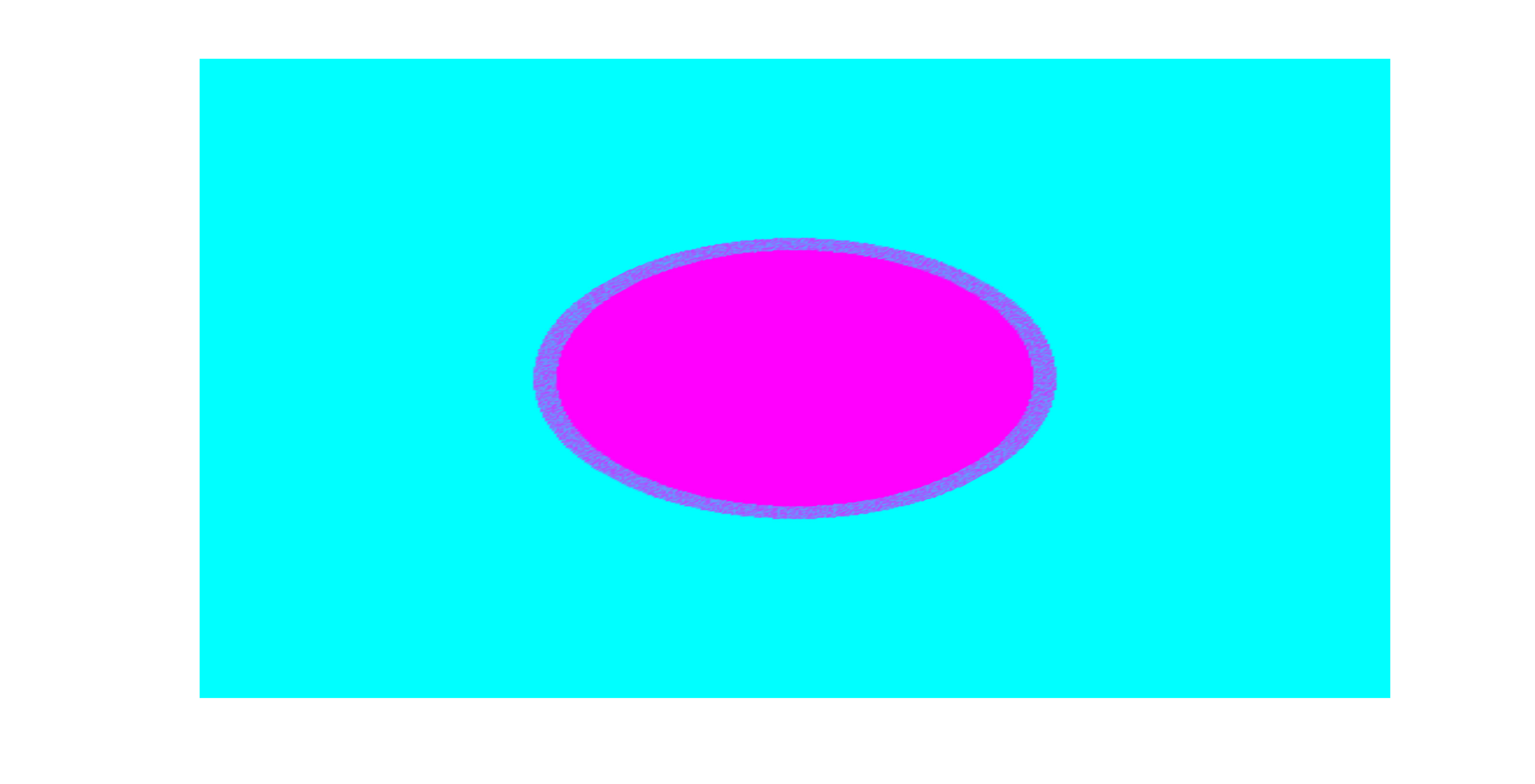}}
  \caption{Active set configurations: Square and Circle}
\end{figure}
 
\begin{table}
  \centering
  \begin{tabular}{cc|cc|cc}
    \hline 
                  &             & \multicolumn{2}{|c|}{square} & \multicolumn{2}{|c}{circle} \\
    \hline
    $h$           & $\epsilon$  & its                          & time   & its & time         \\
    \hline
    $2^{-9}$      & e-2         & 10                           & 49.24  & 10  & 33.04        \\
                  & e-3         & 11                           & 54.25  & 10  & 36.63        \\
                  & e-4         & 36                           & 173.67 & 34  & 117.32       \\
                  & e-5         & 101                          & 418.05 & 89  & 334.28       \\
    \hline
    $2^{-10}$     & e-2         & 10                           & 162.05 & 9   & 149.18       \\
                  & e-3         & 12                           & 175.00 & 10  & 167.47       \\
                  & e-4         & 22                           & 313.02 & 19  & 276.98       \\
                  & e-5         & 73                           & 1113.2 & 65  & 954.67       \\
    \hline
  \end{tabular} \quad
  \begin{tabular}{cc|cc|cc}
    \hline 
    % \cline{1-2} & \cline{3-4} & \cline{5-6}                                                \\
                  &             & \multicolumn{2}{|c|}{square} & \multicolumn{2}{|c}{circle} \\
    %             &             & \cline{3-4}                  & \cline{5-6}                 \\
    \hline
    $h$           & $\epsilon$  & its                          & time   & its & time         \\
    \hline
    $2^{-9}$      & e-2         & 11                           & 41.36  & 10  & 46.19        \\
                  & e-3         & 19                           & 71.29  & 17  & 81.93        \\
                  & e-4         & 22                           & 90.86  & 22  & 114.12       \\
                  & e-5         & 20                           & 102.65 & 20  & 108.45       \\
    \hline
    $2^{-10}$     & e-2         & 11                           & 212.94 & 11  & 232.30       \\
                  & e-3         & 14                           & 315.77 & 13  & 281.88       \\
                  & e-4         & 27                           & 596.06 & 24  & 703.70       \\
                  & e-5         & 19                           & 493.07 & 19  & 469.09       \\
    \hline
  \end{tabular}
  \caption{\label{table}Iteration count for various $\epsilon$ and $h;$ Left: Preconditioner I; Right:
    Preconditioner II}
\end{table}

\section{Conclusion}
For the solution of large scale optimization problem corresponding to
Cahn-Hilliard problem with obstacle problem, we proposed an efficient
preconditioning strategy that requires two elliptic solves. In our initial
experiments up to over million unknowns, the preconditioner remains mesh
independent.  Although, for coarser mesh there seems to be strong dependence on
the epsilon, but as the mesh becomes finer, we observe a significant reduction
in iteration count, thus making the preconditioner effective and useful on finer
meshes.  It is likely that the the iteration count further decreases on finer
meshes.

\bibliography{mybib}
\bibliographystyle{plain}

\end{document}